\let\realverbatim=\verbatim
\let\realendverbatim=\endverbatim
\renewcommand\verbatim{\par\addvspace{6pt plus 2pt minus 1pt}\realverbatim}
\renewcommand\endverbatim{\realendverbatim\addvspace{6pt plus 2pt minus 1pt}}
\newcommand\verbsize{\@setfontsize\verbsize{10}\@xiipt}
\renewcommand\verbatim@font{\verbsize\normalfont\ttfamily}
\def\c{{\bf c}}
\def\D{{\bf D}}
\def\K{{\bf K}}
\def\T{{\bf T}}
\def\p{{p}}
\def\QQ{{Q}}
\DeclareMathAlphabet{\pazocal}{OMS}{zplm}{m}{n}
\newtheorem{theorem}{Theorem}[section]
\newtheorem{lemma}[theorem]{Lemma}
\newtheorem{proposition}[theorem]{Proposition}
\newtheorem{corollary}[theorem]{Corollary}
\newtheorem{definition}[theorem]{Definition}
\newtheorem{example}[theorem]{Example}
\newtheorem{remark}[theorem]{Remark}
\def\Jac{\mbox{\rm Jac$\:$}}
\def\Q{{Q}}
\def\ff{\frak}
\def\End{\mbox{\rm End}}
\def\p{{\rm{p}}}
\begin{document}

\title[Krull's Principal Ideal Theorem]
  {Krull's Principal Ideal Theorem in  \\ non-Noetherian settings}
\author[Bruce Olberding]{Bruce Olberding\\
Department of Mathematical Sciences, New Mexico State University, \\
Las Cruces, NM 88003-8001
\addressbreak
  e-mail\textup{: \texttt{olberdin@nmsu.edu}}}

\receivedline{Received \textup{15} December \textup{2016}}

\maketitle

\begin{abstract}
Let $P$ be a finitely generated ideal of a commutative ring $R$. Krull's Principal Ideal Theorem states that if $R$ is Noetherian and  $P$ is minimal over a principal ideal of $R$, then $P$ has height at most one.  Straightforward examples show that this assertion fails if $R$ is not Noetherian. We consider what can be asserted in the non-Noetherian case in place of Krull's theorem. MSC 2010: 13C15, 13A15

\end{abstract}



\section{Introduction}

Let $R$ be a Noetherian commutative ring. Krull's Principal Ideal Theorem (PIT)   states that  a  prime ideal minimal over a principal ideal of $R$ has height at most one.   It is easy to find examples of non-Noetherian local rings  having a maximal ideal  of height more than one and  minimal over a principal ideal.  
 A simple example is given by  any valuation domain of Krull dimension at least two whose maximal ideal is not the union of the prime ideals properly contained in it.

Moving beyond Noetherian rings,  Krull's proof of the PIT can be adapted to 
assert a  height criterion for rings in the case in which $R$ is not necessarily Noetherian.   For ideals $I$ and $Q$ of the ring $R$ with $Q$  prime, we let $I_{(Q)} = \{r \in R:br \in I$ for some $b \in R \setminus Q\}$.

\medskip

\noindent
{\bf Height Zero Criterion.}
  {\it Let $R$ be a ring, and let $P$ be a prime ideal of $R$ such that $PR_P$ is a finitely generated ideal and $P$ is minimal over a principal ideal of $R$. Let  $Q$ be a prime ideal of $R$ with  $Q \subsetneq P$. Then {\rm ht}$(Q) =0$ if and only if 
$Q$ is minimal over a finitely generated ideal $I$ such that 
 $(I^n)_{(Q)}/(I^{n+1})_{(Q)}$ is a finitely generated $R$-module for each $n>0$.}

\medskip

The PIT for Noetherian rings follows easily from this criterion. 
The condition that $(I^n)_{(Q)}/(I^{n+1})_{(Q)}$ is a finitely generated $R$-module, which obviously holds in the Noetherian case (take $I = Q$), is what proves difficult to satisfy in  non-Noetherian rings.

Although height criteria will not be the focus of this article, we mention this adaptation in order to
help pinpoint technical aspects of why the PIT breaks down for non-Noetherian rings. We also do so to
 contrast the elementary proof of the PIT with the arguments in this article, which are more involved because they rely on tools from local algebra and multiplicative ideal theory that have been developed since Krull's time. In this sense, 
the present article can be viewed as a revisiting of Krull's PIT from a contemporary point of view with an effort to find affirmative assertions  for finitely generated prime ideals minimal over a principal ideal, regardless of whether the ring is Noetherian.

In order to highlight how Krull's PIT follows from first principles, we give a quick proof of the  Height Zero Criterion  that is based on Krull's original argument for the Noetherian case and which can be found in Krull  \cite[\S 3]{Krull} or Northcott \cite[Theorem 6, p.~58]{Northcott}; see also  Nagata \cite[(9.2), p.~26]{Na}:
 If ht$(Q) = 0$, then  $I = (0)$ satisfies the conclusion of the criterion. 
Conversely, suppose that $(I^n)_{(Q)}/(I^{n+1})_{(Q)}$ is a finitely generated $R$-module for each $n>0$. Without loss of generality $R$ is local with maximal ideal $P$. By assumption, there is $x \in P$ such that $P = \sqrt{xR}$. For each $n >0$, let $I_n = (I^n)_{(Q)}$.    
Since $R/xR$ is an Artinian ring, there is $n>0$ such that $I_n +xR = I_{n+1} + xR$. Therefore, $I_n \subseteq I_{n+1} + xR$.  Let $a \in I_n$. Then there is $b \in I_{n+1}$ and $r \in R$ such that $a = b + xr$.  Thus, $xr = a-b \in I_n$. Since $x \not \in Q$, it follows that $r \in I_n$.  This shows that $ I_n \subseteq I_{n+1} + xI_n$, and hence $I_n  = I_{n+1}+PI_n$. Thus $P(I_n/I_{n+1}) = I_n/I_{n+1}$, and since $I_n/I_{n+1}$ is a finitely generated $R$-module, 
 Nakayama's Lemma implies $I_n = I_{n+1}$. 
  Thus $I^nR_Q= I^{n+1}R_Q$. Since $I^nR_Q$ is a finitely generated proper ideal of $R_Q$, another application of Nakayama's Lemma shows that 
$I^nR_Q = 0$.  Since $Q$ is minimal over $I$, this implies ht$(Q) = 0$.

Our focus in the present article is not on a further analysis of  the 
height zero ideals contained in a prime ideal minimal over a principal ideal, nor do we  consider height theorems for specific classes of rings (but see  \cite{ADEH, BAD, KM} for work in such a direction).   
 Instead, we consider the  general question of what can be said affirmatively about a finitely generated prime ideal that is minimal over a principal ideal when no additional restrictions are placed on the ring.  
Some motivating examples for how such a situation arises include local rings with finitely generated maximal ideal and   finite prime spectrum,  stable rings \cite{OFS},  ultrapowers of one-dimensional local Noetherian rings \cite{OS} and  the ring of integer-valued polynomials on a subset of the ring of $p$-adic integers \cite{CHL} (see Example~\ref{new examples}).

  A few reductions are helpful.
 
 \smallskip
 
 {\it First reduction.} Using standard localization arguments, we can assume that the ring $R$ is a local ring and $P$ is the maximal ideal of $R$. To emphasize this, we use ``$M$'' instead of ``$P$'' for this maximal ideal.  We are then interested in the local rings for which there is a principal ideal that is primary for the finitely generated maximal ideal $M$.  For this reason, all our results concern {\it local rings with finitely generated maximal ideal. }   

 \smallskip

Using Sally's work on the Hilbert functions of low-dimensional local Noetherian rings, as well as an approach to reductions of ideals due to Eakin-Sathaye \cite{ES}, we first prove in Theorem~\ref{pre pre stable} 
 that  a finitely generated maximal ideal $M$ of a local ring $R$  
 is minimal over   a principal ideal of $R$ if and only if there is $n>0$ such that every $M$-primary ideal of $R$ can be generated by $n$ elements.


\smallskip
 
 {\it Second reduction.} Let $R_{red}$ be the reduced quotient of the local ring $R$, i.e., $R_{red} = R/N(R)$, where $N(R)$ is the nilradical of $R$.  If the Krull dimension of $R$ is at least one, then $M$ is the radical of a principal ideal if and only if there is a nonzerodivisor $x$ in $R_{red}$ such that $MR_{red} = \sqrt{xR_{red}}$.  This motivates our primary focus in Section 3: What can be asserted about  a local ring $R$ with finitely generated maximal ideal that is the radical of a principal regular\footnote{An ideal of a commutative ring is {\it regular} if it contains a nonzerodivisor.} ${}$ ideal? Equivalently, we are interested in what can be asserted about {\it a local ring having a finitely generated regular maximal ideal that is the radical of a principal ideal.}

 \smallskip

In Theorem~\ref{pre stable} we characterize in a number of ways the situation in which a finitely generated regular  maximal ideal of a local ring is the radical of a principal ideal.  For example, we show that this occurs if and only if there is an integral overring of $R$ in which every maximal ideal is a principal ideal.


\smallskip
 
 {\it Third reduction.}  Following Cohen \cite{Cohen}, a local $R$ with maximal ideal $M$ is a {\it generalized local ring}\footnote{For Cohen a local ring 
was necessarily Noetherian, and hence in his setting every local ring is a generalized local ring. Although we use his  terminology of ``generalized local ring,''  since we do not assume a local ring is Noetherian, it follows that a local ring for us need not be a generalized local ring. }
$\:$ if 
$M$ is finitely generated and $\bigcap_{i>0}M^i = 0$.  If $R$ is a local ring with finitely generated maximal ideal $M$, then $R/(\bigcap_{i>0}M^i)$ is a generalized local ring. This motivates the third reduction of our question: What can be asserted about {\it a generalized local ring with regular maximal ideal that is the radical of a principal ideal}?

 \smallskip
 
 To further justify this reduction, we show in Theorem~\ref{glr theorem} that a local ring  $R$ with regular maximal ideal that is  the radical of  a principal ideal is a pullback of a generalized local ring along a factor map of a semilocal ring. This can be viewed 
 as a decomposition result for $R$, one which makes precise the reduction to a generalized local ring. Examples due to Cahen, Houston and Lucas \cite{CHL}, Gabelli and Roitman \cite{GR} and Heinzer, Rotthaus and Wiegand \cite{HRW} show that a generalized local domain whose maximal ideal is the radical of a principal ideal need not have Krull dimension one; see Example~\ref{new examples}. 
 
With this  last reduction, 
 we find finally in Theorem~\ref{at least one}    a manifestation of the height one nature of the PIT by showing that for a generalized local ring with regular maximal ideal that is the radical of a principal ideal, there is an integral overring of $R$ that has a height one maximal ideal. 
 In contrast to the Noetherian case, the height one prime ideal is  revealed  in an integral extension rather than in the ring itself. 
The examples given in \cite{CHL}  and  \cite{GR} of generalized local rings $R$  of dimension more than one whose maximal ideal is minimal over a principal regular ideal   were constructed as subrings of intersections of a one-dimensional local ring and a semilocal ring. Theorem~\ref{at least one} shows that all such examples must arise this way.

Although our rings are not necessarily Noetherian, we use throughout the article techniques from local algebra. One way in which this is made possible is through a theorem of Cohen  \cite[Theorem 3]{Cohen}   that implies that the $M$-adic completion of a local ring with finitely generated maximal ideal  $M$ is a Noetherian ring. For a different application to non-Noetherian local rings based on passage to the $M$-adic completion, see Schoutens \cite{Sch}. In Section 2, which deals with some needed preliminaries, we use techniques from multiplicative ideal theory in order to develop some technical properties of integrally closed rings that are needed in the later sections. 

\medskip

{\it Notation and terminology.} The {\it dimension} of an ideal $I$ of the ring $R$ is the dimension of the ring $R/I$.  
We denote by $\Q(R)$ the total quotient ring of $R$, and by $\overline{R}$ the integral closure of $R$ in $\Q(R)$. An {\it overring} of $R$ is a ring between $R$ and $\Q(R)$.  
If $I$ and $J$ are $R$-submodules of $\QQ(R)$, we denote by  $(J:_{\QQ(R)} I)$ the $R$-submodule  $\{q \in \QQ(R):qI \subseteq J\}$ of $\QQ(R)$.    
 The Jacobson radical of $R$ is denoted $\Jac R$.
 A ring $R$ is {\it local} if it has a unique maximal ideal. In particular, we do not assume a local ring is Noetherian.
  If $R$ is local with maximal ideal $M$, then $\widehat{R}$ denotes the $M$-adic completion of $R$.




\section{Rings that are Pr\"ufer  in dimension zero}


In the next section we examine integral extensions of a local ring $R$ with finitely generated regular maximal ideal that is the radical of a principal ideal. We show in Theorem~\ref{pre stable} that by taking the integral closure $R'$ in a particular overring of such a ring $R$, we obtain a ring for which every zero-dimensional ideal is invertible. In this section we develop some of the multiplicative ideal theory needed to prove this result by examining semilocal rings for which every finitely generated zero-dimensional ideal is invertible. 
A {\it Pr\"ufer ring} is a ring for which every finitely generated regular ideal $I$ is invertible; i.e., $II^{-1} = R$, where $I^{-1} = (R:_{Q(R)}I)$.   We say a ring $R$ is {\it Pr\"ufer in dimension zero} if every finitely generated regular zero-dimensional ideal is invertible. 
In this section we consider semilocal rings that are Pr\"ufer in dimension zero. 
These are the sorts of rings that arise as integral closures in the next section.



For an extension
 $R \subseteq T$   of rings,  we
 write $T \subseteq \Q(R)$
 if for each $t \in T$ there exists a nonzerodivisor $b \in R$ such
 that $bt \in R$.  
 {If $C$ is an ideal of a ring $R$, then $x \in R$ is
{\it prime to $C$} if whenever $yx \in C$ for some $y \in R$, then
$y \in C$ (i.e., $x + C$ is a nonzerodivisor of $R/C$).


\begin{lemma} \label{n generator} Let $R$ be a semilocal ring such that $\Jac R = \sqrt{mR}$ for some nonzerodivisor $m \in R$, let $C = \bigcap_{k>0}m^kR$ and let $n >0$. Then a zero-dimensional ideal $I$ of $R$  can be generated by $n$ elements if and only if $I/C$ can be generated by $n$ elements. 
\end{lemma}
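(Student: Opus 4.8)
The plan is to prove the nontrivial (``if'') direction by reducing everything to a single observation: a proper zero-dimensional ideal must swallow a power of $m$, and $C$ lies inside every power $m^kR$. The forward (``only if'') direction is immediate once one knows $C\subseteq I$, since if $I=(a_1,\dots,a_n)$ then the images $a_1+C,\dots,a_n+C$ generate $I/C$ over $R/C$. So the real content is that $n$ generators of $I/C$ can be lifted to $n$ generators of $I$ itself, and the subtlety is that $C$ need not be finitely generated, so one cannot simply apply Nakayama's Lemma to $C$ or to $I$.

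First I would record that for a proper zero-dimensional ideal $I$ one has $m^k\in I$ for some $k>0$. Indeed, every prime minimal over $I$ is maximal, so $\sqrt I$ is an intersection of maximal ideals and hence contains $\Jac R=\sqrt{mR}$; in particular $m\in\sqrt I$. Since then $C\subseteq m^kR\subseteq I$, this simultaneously shows $C\subseteq I$, so that $I/C$ is a genuine ideal of $R/C$ and the statement makes sense. The case $I=R$ is trivial, so I may assume $I$ proper.

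Next, assuming $I/C$ is generated by $n$ elements, I would pick $a_1,\dots,a_n\in I$ whose images generate $I/C$ and set $J=(a_1,\dots,a_n)$; then $J\subseteq I$ and $I=J+C$. The key step is to promote this to $m^k\in J$. Write $m^k=j+c$ with $j\in J$ and $c\in C$. Since $C\subseteq m^{k+1}R$, we have $c=m^{k+1}r$ for some $r\in R$, whence $j=m^k(1-mr)$. Because $m\in\Jac R$, the element $1-mr$ is a unit, and therefore $m^k=(1-mr)^{-1}j\in J$.

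Finally, $m^k\in J$ gives $m^kR\subseteq J$ and hence $C\subseteq m^kR\subseteq J$, so that $I=J+C=J$ is generated by the $n$ elements $a_1,\dots,a_n$. I expect the main obstacle to be resisting the temptation to argue via Nakayama or via a Krull-intersection statement for $R/J$, which would demand Noetherian hypotheses we do not have. The whole point of the argument is that the zero-dimensionality of $I$ forces a power of $m$ into the \emph{finitely generated} ideal $J$, after which $C$ is absorbed at no cost precisely because it is contained in every power of $mR$.
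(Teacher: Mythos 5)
Your proof is correct, but it takes a different route from the paper's. The paper passes to the localization $T=R[1/m]$, observes that $C=\bigcap_{k>0}m^kR$ is an ideal of $T$ (this is where the nonzerodivisor hypothesis on $m$ gets used, to identify $R$ with its image in $T$), shows $C\subseteq\Jac T$, applies Nakayama's Lemma to the finitely generated ideal $JT$ to get $JT=T$, and then writes $1=\sum_i j_it_i$ and multiplies by an arbitrary $c\in C$ to conclude $C\subseteq J$. You avoid the localization and Nakayama entirely: zero-dimensionality puts some $m^k$ into $I=J+C$, the containment $C\subseteq m^{k+1}R$ lets you write $m^k=j+m^{k+1}r$ and hence $j=m^k(1-mr)$ with $1-mr$ a unit because $m\in\Jac R$, and then $C\subseteq m^kR\subseteq J$ absorbs $C$ for free. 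Your argument is shorter, more self-contained, and does not actually invoke the hypothesis that $m$ is a nonzerodivisor; it is the same unit trick the author uses in the proof of the Height Zero Criterion in the introduction. What the paper's version buys is a statement of the absorption step ($JT=T$ together with $CT=C\subseteq R$ forces $C\subseteq J$) that isolates the role of $C$ as a common ideal of $R$ and $T$, a perspective that recurs in the pullback description of Theorem 4.1; but for the lemma as stated, both proofs are complete and your reduction is the more economical one.
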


\begin{proof}  Let $T = R[1/m]$, and observe that $C$ is an ideal of $T$.   
It is clear that if $I$ can be generated by $n$ elements, then so can $I/C$.  Conversely, suppose that $I/C$ can be generated by $n$ elements, say $I = J + C$, where $J $ is an ideal of $R$ that can be generated by $n$ elements.  Since $I$ is zero-dimensional and $C$ is an ideal of $T$, we have $T = IT = JT + C$. 

We claim that $C \subseteq \Jac T$.   If $C \not \subseteq \Jac T$, then  there exists a maximal ideal $N$ of $T$ such that $C + N = T$, and hence $1 = c + n$ for some $c \in C$ and $n \in N$.  Since $C \subseteq \Jac R$, 
 $n=1-c$ is a unit in $R$, hence a unit in $T$, a contradiction that shows that $C \subseteq \Jac T$.  
Since $C \subseteq \Jac T$ and $T = JT + C$, Nakayama's Lemma implies that
$T = JT$. 

We claim next that 
 $C \subseteq J$.  Since $JT = T$, there exist $j_1,\ldots,j_n \in J$
and $t_1,\ldots,t_n \in T$ such that $1 = j_1t_1 + \cdots + j_nt_n$.
If $c \in C$, then since $CT = C \subseteq R$ we have $$c = j_1(t_1c) + \cdots + j_n(t_nc) \in
 J.$$ Hence $C \subseteq J$. Since $I = J+C$, we conclude that $I = J$,  
 which proves the lemma.
%
%
%
\end{proof} 



 \begin{theorem} \label{new lemma} Let $R$ be a semilocal ring that is Pr\"ufer in dimension zero, and suppose   $\Jac R $ is a regular ideal and  the radical of a finitely generated ideal $A$.   Let  $R^* = R/(\bigcap_{k>0}A^k)$.  Then
  \begin{itemize}
  \item[(1)] Each maximal ideal of $R$ contains a unique largest nonmaximal prime ideal.
    \item[(2)]  $R^*$ is  a reduced ring with finitely many minimal prime ideals.
    \item[(3)] $R^*$ is Pr\"ufer in dimension zero. 
  
  \item[(4)] Each minimal prime ideal of $R^*$  is contained in a height one maximal ideal of $R^*$.  
  
  \item[(5)] Each height one maximal ideal of $R^*$ contains only one nonmaximal prime ideal.

  \end{itemize}
\end{theorem}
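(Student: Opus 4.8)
The plan is to reduce all five assertions to an analysis of the localizations $R_M$ at the finitely many maximal ideals $M$ of $R$, glued together by a single identity for $C=\bigcap_{k>0}A^k$. First I would record the observations that drive everything. Since $\sqrt A=\Jac R$ is the intersection of the finitely many maximal ideals, any prime containing $A$ contains some $M_i$ and is therefore maximal, so $A$ is zero-dimensional; as $\Jac R$ is regular and $\sqrt A=\Jac R$, a power of a nonzerodivisor lies in $A$, so $A$ is regular. Being finitely generated, regular and zero-dimensional, $A$ (and each $A^k$) is invertible because $R$ is Pr\"ufer in dimension zero. Fixing $M$ and writing $S=R_M$, $N=MS$, the ideal $AS$ is then principal and regular, say $AS=aS$ with $a$ a nonzerodivisor and $\sqrt{aS}=N$. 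A short lifting argument shows $S$ is again Pr\"ufer in dimension zero: a finitely generated regular zero-dimensional ideal $J_0S$ of $S$ has $a^t\in J_0S$, and $J_0+A^t$ is finitely generated, regular and zero-dimensional in $R$, hence invertible, so $J_0S=(J_0+A^t)S$ is invertible.

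For part (1) I work in $S$ and prove the divisibility lemma: every element $x$ of a nonmaximal prime $P$ of $S$ lies in $aS$. The ideal $(x,a)S$ is finitely generated, regular and zero-dimensional (its radical contains $\sqrt{aS}=N$), hence invertible and therefore principal, $(x,a)S=dS$; locality forces $x/d$ or $a/d$ to be a unit, and the first is impossible since then $a\in xS\subseteq P$, contradicting $a\notin P$. So $a/d$ is a unit, $dS=aS$, and $x\in aS$; dividing out $a$ repeatedly (legitimate since $a\notin P$) gives $x\in\bigcap_k a^kS=:D$. Thus every nonmaximal prime of $S$ is contained in $D$. Exactly as in the proof of Lemma~\ref{n generator}, $D$ is an ideal of $S[1/a]$ contained in its Jacobson radical; since in addition every prime of $S[1/a]$ is the extension of a nonmaximal prime of $S$ and hence contained in $D$, this forces $S[1/a]$ to be local with maximal ideal $D$. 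Therefore $D$ is prime in $S$, it is proper ($a\notin a^2S$ gives $D\ne N$), and it contains every nonmaximal prime; contracting, $\mathfrak p_M:=D\cap R$ is the unique largest nonmaximal prime of $R$ contained in $M$, which is part (1).

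The bridge to the global statements is the identity $C=\bigcap_M\mathfrak p_M$. The inclusion $\subseteq$ holds since $C\subseteq A^k$ gives $CR_M\subseteq\bigcap_k a^kS=D$, so $C\subseteq\mathfrak p_M$. For $\supseteq$, if $x\in\bigcap_M\mathfrak p_M$ then $x\in D\subseteq a^kS=A^kR_M$ for every $M$ and $k$, hence $x\in\bigcap_M(A^kR_M\cap R)=A^k$ for every $k$, i.e.\ $x\in C$. Part (2) is then immediate: each $\mathfrak p_M$ is prime, so $R^*=R/\bigl(\bigcap_M\mathfrak p_M\bigr)$ embeds in the finite product $\prod_M R/\mathfrak p_M$ of domains; hence $R^*$ is reduced, and its minimal primes are precisely the minimal members of the finite family $\{\mathfrak p_M/C\}$. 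For part (3) I check local principality: a finitely generated regular zero-dimensional ideal of $R^*$ lifts to $I_0+C$ with $I_0$ finitely generated, and zero-dimensionality forces $I_0\not\subseteq\mathfrak p_M$ for every $M$, so in $S=R_M$ the ideal $I_0S$ avoids the largest nonmaximal prime $D$ and is therefore finitely generated, regular and zero-dimensional, hence principal $I_0S=gS$ with $g\notin\mathfrak p_{M'}S$ for all $M'$; reducing modulo $CR_M$ shows the ideal is principal on the nonzerodivisor $\bar g$ at each maximal ideal of $R^*$, so, being finitely generated and regular, it is invertible.

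Finally, parts (4) and (5) are a prime chase using $\min R^*\subseteq\{\mathfrak p_M/C\}$. A minimal prime is $\mathfrak p_M/C$ for some $M$; the maximal ideal $M/C$ then has height one, since $\mathfrak p_M/C\subsetneq M/C$ gives height $\ge1$, while a chain of length two below $M/C$ would place a prime strictly below $\mathfrak p_M/C$ (any prime $P/C\subsetneq M/C$ is nonmaximal, hence $P\subseteq\mathfrak p_M$), contradicting minimality; this is (4). Conversely, if $M/C$ has height one, every prime strictly below it is minimal and, being nonmaximal, is contained in the largest such prime $\mathfrak p_M/C$; as distinct minimal primes are incomparable, $\mathfrak p_M/C$ is the only nonmaximal prime below $M/C$, which is (5). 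I expect the technical heart, and the main obstacle, to be part (1): the divisibility lemma together with the identification of $S[1/a]$ as a local ring with maximal ideal $\bigcap_k a^kS$. Everything else, including the gluing identity $C=\bigcap_M\mathfrak p_M$ and hence parts (2)--(5), rests on the prime $\mathfrak p_M$ produced there.
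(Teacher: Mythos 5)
Your proof is correct, and it reaches the theorem by a genuinely different route from the paper's. The paper works globally in $R$: for each maximal ideal $M_i$ it produces a principal ideal $J_i$ with $\sqrt{J_i}=M_i$ (prime avoidance plus invertibility), sets $P_i=\bigcap_{k}J_i^k$, and proves that $P_i$ is prime and absorbs every nonmaximal prime inside $M_i$ by invoking the comparability of finitely generated ideals in a Pr\"ufer extension \cite[Theorem 2.11 and Lemma 2.12]{KZ}; the identity $C=P_1\cap\cdots\cap P_n$ then follows from comaximality of the $J_i$. You localize first and replace the appeal to \cite{KZ} with an elementary divisibility lemma: in $R_M$ the ideal $(x,a)R_M$ is finitely generated, regular and zero-dimensional, hence principal, and locality forces the generator to be an associate of $a$, giving $P\subseteq\bigcap_k a^kR_M$ for every nonmaximal prime $P$; primality of $D=\bigcap_k a^kR_M$ then falls out of identifying $D$ as the maximal ideal of the local ring $R_M[1/a]$, and the gluing identity $C=\bigcap_M(D\cap R)$ comes from $I=\bigcap_M(IR_M\cap R)$ rather than from comaximality. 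What your route buys is self-containment (no Pr\"ufer-extension machinery); what it costs is a slightly heavier part (3): the paper lifts finite generation across $C$ via Lemma~\ref{n generator} and concludes invertibility upstairs in $R$, whereas you re-verify local principality in $R^*$ and then need the standard fact that a finitely generated regular ideal which is locally principal on a nonzerodivisor is invertible (cite \cite{Huc} or prove it); and to justify that $\bar{g}$ is a nonzerodivisor modulo $CR_M$ you should state explicitly that $CR_M$ is a finite intersection of nonmaximal primes of $R_M$, each contained in $D$ and hence avoiding $g$. Parts (4) and (5) agree in substance with the paper's prime chase.
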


%
 %
 
 \begin{proof}  Since
$R$ is Pr\"ufer in dimension $0$ and $A$ is a finitely generated regular zero-dimensional ideal of $R$, $A$ is an invertible ideal of $R$. Since $R$ is semilocal, $A = mR$ for some $m \in A$.   
 
(1)  Let $T = R[1/m]$.
 Since an ideal $I$ of $R$ is zero-dimensional if and only if  $IT = 
 T$, every finitely generated ideal $I$ of $R$ for which $IT= T$ is invertible, so that in the terminology of \cite{KZ}, $R \subseteq T$ is a Pr\"ufer extension. Therefore, by \cite[Theorem 2.11 and Lemma 2.12, pp.~102--104]{KZ}, we have that for each maximal ideal $M$ of $R$ and pair of  finitely generated ideals $I$ and $J$ of $R$ with $J$ zero-dimensional,  either $IR_M \subseteq JR_M$ or $JR_M \subseteq IR_M$.  We use this fact next.

 Let $M$ be a maximal ideal of $R$. 
 We show that there is a principal ideal $J$ of $R$ such that $M = \sqrt{J}$.  
 Let $M_1,\ldots,M_n$ denote the maximal ideals of $R$ and assume that $M = M_1$.     
Choose $y \in M \setminus (\bigcup_{j >1}M_j)$.  Then $\sqrt{A+yR} = M$. 
 Since $M$ contains a nonzerodivisor, so does $J := A+yR$.  
 The fact that  $R$ is Pr\"ufer in dimension zero implies that the finitely generated regular ideal  $J$ is an invertible ideal of $R$. Since $R$ is semilocal and $J$ is invertible, $J$ is principal, which proves the claim.

 We claim that $P:=  \bigcap_{k>0} J^k$ is a prime ideal of $R$. 
 Let $r,s \in R$ such that $rs \in P$. 
Suppose
that $r \not \in P$ and $s \not \in P$.  
 Then there exists $k>0$ such that $r,s \not \in J^k$.  
 Then $r/1 \not \in J^kR_M$ since otherwise the fact that  $J$ is $M$-primary implies that $r \in J^k$, a contradiction.   Similarly, $s/1 \not \in J^kR_M$.  As noted, each finitely generated ideal of $R_M$ is comparable to $J^kR_M$. Therefore, 
 $J^kR_M \subseteq rR_M \cap sR_M$, which implies $J^{2k}R_M \subseteq rsR_M \subseteq PR_M \subseteq J^{2k+1}R_M$. Since $J$ is a principal regular ideal, this implies that $JR_M = R_M$, a contradiction to the fact that $J \subseteq M$. Therefore, $P$ is a prime ideal of $R$.  

Next we claim that every prime ideal of $R$ properly contained in $M$ is contained in $P$. 
Let $Q$ be a prime ideal of $R$   contained in $M$. Each finitely generated ideal contained in $QR_M$, and hence $QR_M$ itself, is comparable to $J^kR_M$ for each $k>0$. Thus  for each $k$ either $QR_M \subseteq J^kR_M$  or $J^kR_M \subseteq QR_M$.
If there is $k>0$ such that $J^kR_M \subseteq QR_M$, then 
the fact that $M = \sqrt{J}$ implies that $M = Q$.  Otherwise, if $QR_M \subseteq J^kR_M$ for all $k>0$, then since  $J$ is $M$-primary we conclude that $Q \subseteq J^k$ for all $k>0$. Thus $Q 
 \subseteq \bigcap_{k>0}J^k = P$, which 
 shows that every prime ideal of $R$ properly contained in $M$ is contained in $P$.  

(2)
Let $M_1,\ldots,M_n$ denote the maximal ideals of $R$. As in the proof of (1), there is for each $i$ a principal ideal $J_i$ such that $M_i = \sqrt{J_i}$.  
For each $i$,  let $P_i = \bigcap_{k>0}J_i^k$. As we have shown, each $P_i$ is the unique largest nonmaximal prime ideal of $R$ contained in $M_i$.   
 Using the comaximality of the $J_i$, we have  $C: = \bigcap_{k>0}m^kR = \bigcap_{k>0} (J_1^k \cap \cdots \cap J^k_n) = P_1 \cap \cdots \cap P_n$.  Therefore, $R^* = R/C$ is a reduced ring with finitely many minimal prime ideals.

   (3) 
 To see that   $R^*$ is  Pr\"ufer 
   in dimension zero, 
 let $I$ be a zero-dimensional  ideal of
  $R$ containing $C$ such that $I/C$ is a finitely generated ideal of $R^*$.  By Lemma~\ref{n generator}, $I$ is a finitely generated ideal of $R$, and hence  invertible by assumption. Since $R$ is semilocal, $I$ is then principal, and hence $I/C$ is a principal ideal of $R^*$. Since also $I/C$ is zero-dimensional, $I/C$ is a regular ideal of $R^*$ (it contains a power of the nonzerodivisor $m+C$) and hence $I/C$ is invertible. Thus $R^*$ is Pr\"ufer in dimension zero.
 
 %
%

(4)
After relabeling, we may assume there exists $t \leq n$ such that $P_1, \ldots, P_t$ are the minimal primes  over $C$.  For each $i \leq t$, if $Q $ is a prime ideal properly contained in $M_i$ and containing $C$, then, as we have established in (1), $C \subseteq Q \subseteq P_i$, so that since $P_i$ is minimal over $C$, we have $Q = P_i$. Thus for $i \leq t$,   $M_i$ is height one prime ideal of $R^*$. 

(5)
Let  $M$ be  a maximal ideal of $R$ such that $M/C$ has height one in $R^*$. Each minimal prime ideal of $R^*$ is of the form $P_i/C$ for some $i \leq t$.  
Suppose  $P_i + P_j \subseteq M$ for some  $i,j \leq t$.  Let  $k \leq n$ such that $M = M_k$.  Since $P_k $ is the unique largest 
nonmaximal prime ideal  of $R$ contained in $M_k$, we have $P_i +P_j \subseteq P_k \subseteq M_k$.  Since $C \subseteq P_i \cap P_j$ and $M_k/C$ has height one, we have $P_i = P_k  = P_j$. This shows that  $M/C$ contains a unique minimal prime ideal of $R^*$. Since $M/C$ has height one, it follows that $M/C$ contains a unique nonmaximal prime ideal of $R^*$. 
 \end{proof}

\begin{corollary} \label{Prufer subring local}  Let $R$ be a  local  ring whose maximal ideal $M$ is the radical of a regular principal ideal $I$. Then $R$  
is  Pr\"ufer in dimension zero if  and only if $R^*=R/(\bigcap_{k>0}I^k)$ is a  valuation domain of Krull dimension one. 
\end{corollary}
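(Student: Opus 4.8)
The plan is to specialize Theorem~\ref{new lemma} to the local situation and then to recognize the quotient $R^*$ as a valuation domain, while for the converse I would lift a principal generator through $C$ by means of Lemma~\ref{n generator}. Throughout, write $I = mR$ with $m$ a nonzerodivisor (possible since $I$ is principal and regular), so that $\Jac R = M = \sqrt{mR}$ and $C := \bigcap_{k>0}I^k = \bigcap_{k>0}m^kR$ is exactly the ideal occurring in both results. For the forward implication, assume $R$ is Pr\"ufer in dimension zero; then the hypotheses of Theorem~\ref{new lemma} hold with $A = I$. Since $R$ is local it has a single maximal ideal, so in the notation of that theorem $n = t = 1$ and $C = P_1$ is prime. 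Because $C = P_1$ is prime, conclusion~(2) gives that $R^* = R/C$ is a domain; conclusion~(3) gives that $R^*$ is Pr\"ufer in dimension zero; and conclusion~(4) gives that the unique maximal ideal $M^* = M/C$ has height one, so that $\dim R^* = 1$.

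It then remains to check that a one-dimensional local domain that is Pr\"ufer in dimension zero is a valuation domain. The only prime ideals of $R^*$ are $0$ and $M^*$, so any nonzero finitely generated ideal $J^*$ satisfies $\sqrt{J^*} = M^*$ and is thus zero-dimensional, and it is regular because $R^*$ is a domain; hence $J^*$ is invertible. Consequently every nonzero finitely generated ideal of $R^*$ is invertible, i.e.\ $R^*$ is a Pr\"ufer domain, and a local Pr\"ufer domain is a valuation domain.

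For the converse, suppose $R^* = R/C$ is a valuation domain of Krull dimension one, so that $C$ is prime and $C \subsetneq M$. Let $J$ be a finitely generated regular zero-dimensional ideal of $R$; I must show $J$ is invertible. Since $R$ is local and $J$ is zero-dimensional, $\sqrt{J} = M$, so $m^j \in J$ for some $j$ and hence $C \subseteq m^jR \subseteq J$; moreover $J \not\subseteq C$, for otherwise $M = \sqrt{J} \subseteq C$, contradicting $C \subsetneq M$. Thus $J/C$ is a nonzero finitely generated ideal of the valuation domain $R^*$ and is therefore principal, so by Lemma~\ref{n generator} (with $n = 1$) the ideal $J$ is itself principal, say $J = uR$. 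As $J$ is regular, $u$ is a nonzerodivisor and $J$ is invertible; hence $R$ is Pr\"ufer in dimension zero.

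I expect the main obstacle to be the final step of the forward direction, namely upgrading ``Pr\"ufer in dimension zero'' to ``valuation domain'' for $R^*$. This is precisely where the height-one conclusion of Theorem~\ref{new lemma}(4) is indispensable: it forces every nonzero finitely generated ideal of $R^*$ to be zero-dimensional, so that the hypothesis controlling zero-dimensional ideals in fact controls all finitely generated ideals. A smaller technical point, used in the converse, is the observation that a zero-dimensional ideal of the local ring $R$ automatically contains $C$, which is what makes Lemma~\ref{n generator} applicable to $J$.
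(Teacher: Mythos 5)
Your proof is correct and follows essentially the same route as the paper's: Theorem~\ref{new lemma} (parts (2)--(4)) yields that $R^*$ is a one-dimensional local domain that is Pr\"ufer in dimension zero, hence a valuation domain, and the converse reduces a finitely generated regular zero-dimensional ideal modulo $C$ and lifts principality back via Lemma~\ref{n generator}. You merely make explicit two points the paper leaves implicit, namely that every nonzero finitely generated ideal of the one-dimensional local domain $R^*$ is regular and zero-dimensional, and that a zero-dimensional ideal of $R$ automatically contains $C$.
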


\begin{proof} Suppose  $R$ is  Pr\"ufer in dimension zero. By Theorem~\ref{new lemma},  $R^*$ is a reduced ring that is Pr\"ufer in dimension zero. This theorem also shows that since $R^*$ is local, there is a unique minimal prime ideal of $R^*$ and the maximal ideal of $R^*$ has height one. Therefore, as a one-dimensional local Pr\"ufer domain, $R^*$ is a one-dimensional valuation domain.

Conversely, suppose $R^*$ is a one-dimensional valuation domain.
  Let $I$ be a finitely generated zero-dimensional ideal of $R$.  Then the image 
$I^*$ of $I$ in $R^*$ is a finitely generated  ideal. Since $R^*$ is a valuation domain,  $I^*$ is a principal ideal in $R^*$. Therefore, $I$ is a principal ideal in $R$ by Lemma~\ref{n generator}. 
Since $M$ contains a nonzerodivisor and  $I$ is a zero-dimensional ideal, it follows that $I$ is  a regular ideal, and hence $I$ is invertible. Thus $R$ is Pr\"ufer in dimension zero. 
\end{proof}

We show in Corollary~\ref{int closed} that an integrally closed local ring with finitely generated regular maximal ideal $M$ has the property that $M $ is the radical of a principal ideal if and only if $M$ is a principal ideal. Such a ring clearly is Pr\"ufer in dimension zero since every zero-dimensional ideal is principal. 
The next example shows that without the assumption of finite generation of the maximal ideal, an integrally closed local domain whose maximal ideal is the radical of a principal ideal need not even be Pr\"ufer in dimension zero.

\begin{example} {An integrally closed local domain $R$ whose maximal ideal $M$ is the radical of a principal ideal $I$ but for which $R$ is not Pr\"ufer in dimension zero.} 
%
 {\em This example is based on \cite[Example 5.3]{Ohm}.  Let $p$ be  a prime integer, let $v_p$ be the $p$-adic valuation on ${\mathbb{Q}}$, and let $v$ be the rank one valuation  defined on ${\mathbb{Q}}[X]$ by 
 $$v(a_0 + a_1X+\cdots + a_nX^n) = \inf \{v_p(a_i)+i \pi:i=0,1,\ldots,n\},$$ where $a_0,\ldots,a_n \in {\mathbb{Q}}$, and extended to ${\mathbb{Q}}(X)$ by defining $v(f/g) = v(f)  - v(g)$ if $f,g \in {\mathbb{Q}}[X]$ with $g \ne 0$. 
   Let $V$ be the valuation ring in ${\mathbb{Q}}(X)$ corresponding to $v$, and define $A = V \cap {\mathbb{Q}}[X]$. Finally, let $R = A_{{{\mathfrak M}_V} \cap A}$, where ${\mathfrak{M}}_V$ is the maximal ideal of $V$, and let $M$ denote the maximal ideal of $R$.  Then $M = \sqrt{pR}$ and $M$ has height $2$ \cite[Lemmas 5.4 and 5.8]{Ohm}.   Yet since $p \in {\mathfrak{M}}_V$ and $V$ is  valuation ring with Krull dimension one, we have $\bigcap_{i}p^iR \subseteq \bigcap_i p^iV = 0$. 
In the notation of Corollary~\ref{Prufer subring local}, $R^* = R$.     
   Since $R$ is not a valuation ring, Corollary~\ref{Prufer subring local} implies $R$ is not Pr\"ufer in dimension zero. }
\end{example}

\section{Principal ideal theorems}  


\label{Noetherian}

In this section we prove principal ideal theorems for the case in which 
$R$ is a local ring  with finitely generated maximal ideal $M$. With the exception of Theorem~\ref{pre pre stable}, the focus is on the case in which $M$ is also regular. As discussed in the introduction, this is a case to which it is always possible to reduce.   

The proof of our first version of a principal ideal theorem, Theorem~\ref{pre pre stable}, is mainly a matter of applying the work of Eakin and Sathaye \cite{ES} and Sally \cite{Sbook}.   For a local ring $R$ with maximal ideal $M$, we denote by $\widehat{R}$ the $M$-adic completion of $R$.

\begin{theorem} \label{pre pre stable} The following are equivalent for a local ring $R$ with finitely generated maximal ideal $M$. 
\begin{itemize}

\item[(1)] $M$ is the radical of a principal ideal of $R$. 


\item[(2)] $\widehat{R}$ is a  (Noetherian)  ring of Krull dimension at most one.

\item[(3)] There is $n>0$ such that every $M$-primary ideal of $R$ can be generated by $n$ elements.

\item[(4)] For each $M$-primary ideal $I$ of $R$ there is $n>0$ and $x \in I^n$ such that $I^{2n} = xI^n$.

\end{itemize}
\end{theorem}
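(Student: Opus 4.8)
The plan is to establish the cyclic chain $(1)\Rightarrow(2)\Rightarrow(3)\Rightarrow(4)\Rightarrow(1)$. The two facts I would use repeatedly are Cohen's theorem \cite{Cohen}, which tells me $\widehat{R}$ is Noetherian local with maximal ideal $M\widehat{R}$, together with the isomorphisms $R/M^k\cong\widehat{R}/M^k\widehat{R}$ for all $k$. Since every $M$-primary ideal $I$ of $R$ contains some power $M^k$, these isomorphisms give a bijection $I\mapsto I\widehat{R}$ between the $M$-primary ideals of $R$ and the $M\widehat{R}$-primary ideals of $\widehat{R}$, and because the minimal number of generators of such an $I$ equals $\dim_{R/M}(I/MI)$, a quantity that lives inside $R/M^{k+1}=\widehat{R}/M^{k+1}\widehat{R}$, this bijection preserves numbers of generators. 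This lets me transport finite-length information freely between $R$ and the Noetherian ring $\widehat{R}$.

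The two ends of the cycle are short. For $(1)\Rightarrow(2)$: if $M=\sqrt{xR}$, then finite generation of $M$ gives $M^k\subseteq xR$ for some $k$, so $M^k\widehat{R}\subseteq x\widehat{R}\subseteq M\widehat{R}$ and hence $M\widehat{R}=\sqrt{x\widehat{R}}$; as $\widehat{R}$ is Noetherian and $M\widehat{R}$ is its maximal ideal, the classical Krull PIT yields $\height(M\widehat{R})\le 1$, i.e.\ $\dim\widehat{R}\le 1$. For $(4)\Rightarrow(1)$: apply $(4)$ to the $M$-primary ideal $I=M$ to obtain $n>0$ and $x\in M^n$ with $M^{2n}=xM^n\subseteq xR$; then $xR\subseteq M$ forces $\sqrt{xR}\subseteq M$, while $M^{2n}\subseteq xR$ forces $M=\sqrt{M^{2n}}\subseteq\sqrt{xR}$, so $M=\sqrt{xR}$.

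For $(2)\Rightarrow(3)$ I would pass through $\widehat{R}$. By the transfer described above it suffices to bound, uniformly, the number of generators of the $M\widehat{R}$-primary ideals of the Noetherian local ring $\widehat{R}$ of dimension at most one. This is exactly the content of Sally's bounds on the numbers of generators of primary ideals in low-dimensional Noetherian local rings \cite{Sbook}: in dimension at most one such a uniform bound $n$ exists. (That dimension at most one is also forced by the bound is the heuristic behind the equivalence, since $\dim_{R/M}(M^m/M^{m+1})\to\infty$ once $\dim\widehat{R}\ge 2$.)

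The heart of the argument, and the step I expect to be the main obstacle, is $(3)\Rightarrow(4)$. Fix an $M$-primary ideal $I$; by $(3)$ each power $I^m$ is finitely generated with $\dim_{R/M}(I^m/MI^m)\le n$. I would form the fiber cone $F=\bigoplus_{m\ge 0} I^m/MI^m$, a standard graded algebra over the field $R/M$ generated by its finite-dimensional degree-one piece, hence Noetherian; the uniform bound on its graded dimensions forces $\dim F\le 1$. By graded prime avoidance there is a homogeneous parameter $\bar{x}\in I^d/MI^d$ for some $d\ge 1$, and any lift $x\in I^d$ then generates a reduction of $I^d$, so $(I^d)^{s+1}=x(I^d)^s$ for some $s$; iterating this relation gives $I^{2ds}=x^{s}I^{ds}$ with $x^{s}\in I^{ds}$, which is $(4)$ with exponent $N=ds$. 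The subtlety is the production of a single reduction element: over an infinite residue field one may take $d=1$, and this is precisely where I would invoke the Eakin--Sathaye reduction theorem \cite{ES}, which from the bound $\dim_{R/M}(I^m/MI^m)<m+1$ (valid once $m\ge n$) produces $y\in I$ with $I^m=yI^{m-1}$. Over a finite residue field Eakin--Sathaye does not apply directly, and allowing a homogeneous parameter in a higher degree $d$ (equivalently, replacing $I$ by a power) is exactly what sidesteps the failure of degree-one reductions to exist; I would therefore present the fiber-cone argument as the residue-field-independent route, noting that it specializes to the Eakin--Sathaye statement when $R/M$ is infinite.
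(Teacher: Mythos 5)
Your proposal is correct, and its overall architecture --- the cycle $(1)\Rightarrow(2)\Rightarrow(3)\Rightarrow(4)\Rightarrow(1)$, Cohen's theorem plus the classical PIT for $(1)\Rightarrow(2)$, Sally's uniform bound transported through $R/M^{k+1}\cong\widehat{R}/M^{k+1}\widehat{R}$ for $(2)\Rightarrow(3)$, and the immediate computation for $(4)\Rightarrow(1)$ --- coincides with the paper's proof. The one place you diverge is $(3)\Rightarrow(4)$. There the paper simply applies $(3)$ to $I^n$ (which, being $M$-primary, is generated by $n$ elements) and cites Eakin--Sathaye \cite[Corollary~1, p.~446]{ES}, whose conclusion is exactly $I^{2n}=yI^n$ for some $y\in I^n$; that corollary carries no hypothesis on the residue field, precisely because the reduction element is permitted to live in $I^n$ rather than in $I$ --- the same degree-raising trick you implement by hand. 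So your worry that ``Eakin--Sathaye does not apply directly'' over a finite residue field applies to the degree-one reduction theorem but not to the corollary the paper actually invokes. Your fiber-cone argument (bounded Hilbert function forces $\dim F\le 1$, a homogeneous parameter of some degree $d$ exists over any field, Nakayama converts $I^m=xI^{m-d}+MI^m$ into $I^m=xI^{m-d}$, and iteration gives $I^{2ds}=x^sI^{ds}$ with $x^s\in I^{ds}$) is sound and self-contained; what it buys is independence from the Eakin--Sathaye reference, at the cost of reproving essentially that corollary. Do make sure to note, as the paper does, that each $I^m$ is finitely generated (it contains a power of the finitely generated ideal $M$, and $R/M^k$ is Noetherian), since your Nakayama steps and the generator-counting via $\dim_{R/M}(I/MI)$ depend on it.
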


\begin{proof}
(1) $\Rightarrow$ (2) 
Let $m \in M$ be such that $M = \sqrt{mR}$. 
Since the maximal ideal of $R$ is finitely generated, $\widehat{R}$ is a Noetherian ring \cite[Theorem 3]{Cohen} whose maximal ideal is the radical of $m\widehat{R}$. By Krull's PIT,  $\widehat{R}$ has Krull dimension at most one. 

(2) $\Rightarrow$ (3) 
By \cite[Theorem 1.2, p.~51]{Sbook}, the fact that $\widehat{R}$ is a local Noetherian ring of Krull dimension at most one implies  there is $n>0$ such that every ideal of $\widehat{R}$ can be generated by $n$ elements.
Let $I$ be an $M$-primary ideal of $R$. Since $M$ is a finitely generated ideal, $I$ is a finitely generated ideal and  there is   $k>0$ such that $M^k \subseteq I$.   Now $I\widehat{R}/M^{k+1}\widehat{R} \cong I/M^{k+1}$ and $\widehat{R}/M^{k+1}\widehat{R} \cong R/M^{k+1}$. Since $I\widehat{R}$ can be generated by $n$ elements as an $\widehat{R}$-module, 
 we conclude that $I/M^{k+1}$ can be generated by $n$ elements as an $R$-module. Since $M^{k+1} \subseteq IM$, Nakayama's Lemma implies that $I$ can be generated by $n$ elements. 
 
 (3) $\Rightarrow$ (4) Let $I$ be an $M$-primary ideal of $R$. By (3), $I^n$ can be generated by $n$ elements. This  implies that $I^{2n} = yI^n$ for some $y \in I^n$ \cite[Corollary~1, p.~446]{ES}.  
 
 (4) $\Rightarrow$ (1) By (4), there exists $n>0$ and $x \in M^n$ such that $M^{2n} = xM^n$. Thus $M = \sqrt{xR}$ since $M^{2n} \subseteq xR$.  
\end{proof}

The next lemma, which in its proof makes  use of the existence of a uniform bound due to Rees, is applied in Lemma~\ref{lift} to prove a lifting property for completions. In this sense Lemma~\ref{bound} can be viewed as a substitute for the Artin-Rees lemma in our non-Noetherian context. 
We denote by $\overline{I}$ the integral closure of the ideal $I$ in the ring $R$, that is, $\overline{I}$ is the set of all $r \in R$ for which there is  $n>0$ and $a_k \in I^k$ ($k=1,2,\ldots,n$) such that $$r^n + a_{1}r^{n-1} + \cdots + a_{n-1}r + a_n = 0.$$

\begin{lemma} \label{bound} If $R$ is a local ring with finitely generated maximal ideal $M$, then for each  $M$-primary ideal $I$ of $R$ there is an integer $k>0$ such that  $(\overline{I^n})^{k} \subseteq I^n$ for every $n > 0$.  
\end{lemma}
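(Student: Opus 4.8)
The plan is to reduce the statement about $R$ to a statement about the $M$-adic completion $\widehat{R}$, which is Noetherian by Cohen's theorem, and then invoke a uniform Briançon--Skoda / Rees-type bound that holds in the Noetherian setting. The key insight is that a single exponent $k$ must work simultaneously for \emph{all} powers $I^n$ of a fixed $M$-primary ideal $I$. Over a Noetherian ring this kind of uniformity is exactly what Rees's uniform Artin--Rees results and the Briançon--Skoda theorem provide, so the whole difficulty is packaged into transferring the hypotheses and conclusion back and forth across the completion map.

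\medskip

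\emph{First I would} fix an $M$-primary ideal $I$ of $R$. Since $M$ is finitely generated, $I$ is finitely generated and $M$-primary, so $\widehat{I} := I\widehat{R}$ is an ideal of the Noetherian local ring $\widehat{R}$ that is primary for the maximal ideal $\widehat{M} = M\widehat{R}$. The crucial compatibility to check is that integral closure of ideals is preserved under the (faithfully flat, with the right separatedness) completion, i.e.\ that $\overline{I^n}\,\widehat{R} = \overline{\,\widehat{I}^{\,n}\,}$, or at least the inclusion $\overline{I^n}\,\widehat{R} \subseteq \overline{\,(I\widehat{R})^{n}\,}$ together with a faithfully-flat descent for containments of the form $(\,\overline{I^n}\,)^k \subseteq I^n$. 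Concretely, an element $r \in R$ lies in $\overline{I^n}$ iff it satisfies an integral equation with coefficients $a_j \in I^{nj}$; this same equation witnesses $r \in \overline{\,\widehat{I}^{\,n}\,}$ in $\widehat{R}$, and conversely the integral closure of an extended ideal under a flat map with geometrically reduced/analytically suitable fibers is controlled by the original. I would cite the standard statement that for a Noetherian local ring with Noetherian completion, $\overline{J}\,\widehat R = \overline{J\widehat R}$ and that containment of finitely generated $R$-ideals may be tested after the faithfully flat base change $R \to \widehat{R}$.

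\medskip

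\emph{Then} in the Noetherian ring $\widehat{R}$, I would apply the uniform bound of Rees on integral closures of powers: for the single $\widehat{M}$-primary ideal $\widehat{I}$ there is an integer $k>0$ — here the Briançon--Skoda-type uniform exponent, or equivalently Rees's result that $\overline{\widehat{I}^{\,n}}$ and $\widehat{I}^{\,n}$ differ by a bounded ``gap'' uniformly in $n$ — such that $\bigl(\overline{\widehat{I}^{\,n}}\bigr)^{k} \subseteq \widehat{I}^{\,n}$ for every $n > 0$. The point of the preamble's remark is precisely that Rees supplies a \emph{uniform} $k$ independent of $n$, which is what an Artin--Rees argument alone would not give directly. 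Transporting this containment back by faithful flatness — using that $\widehat{I}^{\,n} \cap R = I^n$ and that $\bigl(\overline{I^n}\bigr)^{k}\widehat{R} \subseteq \bigl(\overline{\widehat{I}^{\,n}}\bigr)^{k}$ — yields $\bigl(\overline{I^n}\bigr)^{k} \subseteq I^n$ in $R$ for the same $k$, completing the proof.

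\medskip

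\emph{The hard part} will be the descent of integral closure across $R \to \widehat{R}$, since $R$ is not assumed Noetherian and the general behavior of $\overline{I^n}$ under non-flat or merely-separated completion maps is delicate; I would isolate this as the one place requiring care, probably proving the containment $\bigl(\overline{I^n}\bigr)^{k}\widehat{R} \subseteq \bigl(\overline{\widehat{I}^{\,n}}\bigr)^{k}$ directly from the definition of integral dependence (an integral equation in $R$ remains one in $\widehat{R}$), so that I only ever \emph{push forward} integral closures and \emph{pull back} the final membership via $\widehat{I}^{\,n}\cap R = I^n$, thereby avoiding the subtler reverse inclusion $\overline{\widehat{I}^{\,n}} \subseteq \overline{I^n}\,\widehat{R}$ altogether.
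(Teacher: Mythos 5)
Your overall strategy --- complete, invoke Rees's uniform bound in the Noetherian ring $\widehat{R}$, and descend --- is the same as the paper's, but there is a genuine gap at the central step. Rees's theorem in the form you want (a single $\ell$ with $\overline{J^{\,n+\ell}} \subseteq J^{\,n}$ for all $n$, and likewise the uniform Brian\c{c}on--Skoda statements) requires the ring to be \emph{analytically unramified}; for a complete local ring this means \emph{reduced}. Indeed, the existence of such a linear bound is Rees's \emph{characterization} of analytic unramifiedness, so the result genuinely fails when $\widehat{R}$ has nilpotents --- and nothing in the hypotheses prevents that (every nilpotent lies in $\overline{J^{\,n}}$ for every $n$, so $\overline{J^{\,n+\ell}} \subseteq J^{\,n}$ cannot hold in general). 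The paper's proof spends most of its effort on exactly this point: it passes to $S = \widehat{R}/N$ with $N$ the nilradical, applies Rees there to get $(\overline{J^{\,n}})^{\ell} \subseteq J^{\,n}$ with $J = IS$, and then lifts back to $\widehat{R}$ using $N^{t} = 0$, which inflates the exponent to $\ell t$. Your proposal skips this entirely, and without it the citation you lean on is not applicable.

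A secondary problem is the descent mechanism. You repeatedly appeal to faithful flatness of $R \to \widehat{R}$, but $R$ is not assumed Noetherian and this map need not be flat; ``containment of finitely generated ideals may be tested after faithfully flat base change'' is not available here. The conclusion you want, namely that $(\overline{I^{\,n}})^{k}\widehat{R} \subseteq I^{\,n}\widehat{R}$ implies $(\overline{I^{\,n}})^{k} \subseteq I^{\,n}$, is nonetheless true in this situation, but for a different reason: both ideals are $M$-primary, $M$ is finitely generated, so $M^{u}$ lies in both for some $u$, and Cohen's theorem gives $R/M^{u} \cong \widehat{R}/M^{u}\widehat{R}$; the containment is then checked modulo $M^{u}$. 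That is the argument the paper uses, and it is the one you should substitute for the flatness claim. (The ``push forward'' direction $\overline{I^{\,n}}\,\widehat{R} \subseteq \overline{I^{\,n}\widehat{R}}$, which you prove from the integral equation, is fine and matches the paper.) Finally, note the paper also treats $n=1$ separately, since the manipulation $n\ell \geq \ell + n$ used to convert Rees's additive bound into a multiplicative one only works for $n \geq 2$; this is a small point but your uniform claim for ``every $n>0$'' needs it.
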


\begin{proof}  By \cite[Theorem 3]{Cohen}, $\widehat{R}$ is a Noetherian ring. 
 Let $I$ be an $M$-primary ideal of $R$, let $N$ be the nilradical of $\widehat{R}$, let $S  = \widehat{R}/N$ and let $J = IS$. 
   By a theorem of Rees (see \cite[Theorem 9.1.2]{SH}), since $S=\widehat{R}/N$ is reduced,   there is an integer $\ell\geq 2$ such that the integral closure  $\overline{J}$ of $J$ in $S$ has the property that for all $n\geq 0$,  $\overline{J^{n+\ell}} \subseteq J^n$.  
Now for each $n\geq 2$, since $\ell \geq 2$, we have $n \geq 2 \geq \ell/(\ell-1)$.  Thus $n(\ell-1) \geq \ell$, so that $n\ell \geq \ell + n$.  Consequently,  $\overline{J^{n \ell}} \subseteq \overline{J^{\ell +n}} \subseteq J^n.$
 Since $\overline{J^n}^{\:\ell} \subseteq \overline{J^{n\ell}}$ \cite[Remark 1.3.2(4)]{SH}, it follows that $(\overline{J^n})^{\ell} \subseteq J^n$.  
Now  $\overline{I^n}S \subseteq \overline{I^nS} = \overline{J^n}$ \cite[Remark 1.1.3(7)]{SH}, so $$(\overline{I^n})^{\ell} S \subseteq (\overline{J^n})^{\ell} \subseteq J^n = I^nS.$$
Hence $(\overline{I^n})^{\ell} \widehat{R} \subseteq I^n\widehat{R} + N$. Let $t>0$ be such that $N^t = 0$. Then $$(\overline{I^n})^{ \ell  t}\widehat{R} \subseteq (I^n\widehat{R} + N)^t \subseteq I^n\widehat{R}.$$  
Since $I$ is $M$-primary and $M$ is finitely generated, there is $u>0$ such that $M^u \subseteq (\overline{I^n})^{ \ell  t} \cap I^n.$ Since $(\overline{I^n})^{ \ell  t}\widehat{R} \subseteq  I^n\widehat{R}$, it follows that  $(\overline{I^n})^{ \ell  t}/M^u \subseteq I^n/M^u$. Therefore, 
for each $n \geq 2$,  we have $(\overline{I^n})^{ \ell  t} \subseteq  I^n$.  
Since $\ell$ and $t$ depend only on $I$ and the ring $R$, respectively, the lemma is proved for all $n\geq 2$. Finally, in the case $n =1$, since $I$ and  ${\overline{I}}$ have the same radical, we have that $(\overline{I})^{ s} \subseteq I$ for some $s>0$. Thus we obtain the lemma for all $n\geq 1$ by choosing $k = \max \{\ell t,s\}$.   
\end{proof}


\begin{lemma} \label{lift} Let $R$ be a local ring with finitely generated maximal ideal $M$, and let $S$ be a finite  extension of $R$ in $Q(R)$.  Then the inclusion mapping $R \rightarrow S$ lifts to an embedding of rings $\widehat{R} \rightarrow \widehat{S}$, where both completions are with respect to the $M$-adic topology. Moreover, $\widehat{S}$ is a finite extension of $\widehat{R}$. 
\end{lemma}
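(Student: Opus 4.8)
The plan is to push everything through the inverse-limit definitions $\widehat{R}=\varprojlim R/M^nR$ and $\widehat{S}=\varprojlim S/M^nS$, treating the construction, the finiteness, and the injectivity in turn. First note that each $M^nS$ is an ideal of $S$ (as $S$ is a ring with identity), so $\widehat{S}$ is again a ring, and the compatible reductions $R/M^nR\to S/M^nS$ induced by $R\subseteq S$ assemble into a ring homomorphism $\alpha\colon\widehat{R}\to\widehat{S}$. For finiteness I would choose $s_1,\dots,s_d$ generating $S$ as an $R$-module and consider the surjection $R^d\twoheadrightarrow S$. At each level this gives a surjection $(R/M^nR)^d\twoheadrightarrow S/M^nS$, and the kernels $K_n$ form an inverse system whose transition maps are surjective, each $K_n$ being a quotient of the fixed module $W=\ker(R^d\to S)$. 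Hence $\{K_n\}$ satisfies the Mittag--Leffler condition, $\varprojlim^{1}K_n=0$, and passing to the limit yields a surjection $\widehat{R}^{\,d}\twoheadrightarrow\widehat{S}$. Thus $\widehat{S}$ is module-finite over $\widehat{R}$, and since $\widehat{R}$ is Noetherian by Cohen's theorem \cite[Theorem 3]{Cohen}, $\widehat{S}$ is Noetherian as well.

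For the embedding assertion, the kernel of $\alpha$ is $\varprojlim_n (M^nS\cap R)/M^nR$, so injectivity is equivalent to the statement that the $M$-adic filtration $\{M^nR\}$ and the induced filtration $\{M^nS\cap R\}$ on $R$ are equivalent: for each $n$ there is $m$ with $M^mS\cap R\subseteq M^nR$. This is exactly what the Artin--Rees lemma would supply in the Noetherian case, and it is here that Lemma~\ref{bound} is meant to serve as a substitute. To get a foothold I would exploit the module-finiteness of $S$ over $R$: if $x\in M^nS\cap R$ then $xS\subseteq M^nS$, so the determinant trick applied to multiplication by $x$ on the faithful finite $R$-module $S$ produces an equation $x^d+c_1x^{d-1}+\cdots+c_d=0$ with $c_j\in M^{nj}$. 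In particular $x\in\overline{M^n}$, and substituting back gives $x^d\in M^nR$.

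From $x^d\in M^nR$ it already follows that every element of $\ker\alpha$ is nilpotent in $\widehat{R}$, so $\ker\alpha\subseteq N(\widehat{R})$. To remove these nilpotents I would bring in the conductor: since $S\subseteq Q(R)$ is module-finite, clearing a common denominator gives a nonzerodivisor $b\in R$ with $bS\subseteq R$, whence $b(M^nS\cap R)\subseteq M^nR$ and therefore $\hat{b}\cdot\ker\alpha=0$. Combining this with the uniform bound $(\overline{M^n})^{k}\subseteq M^n$ of Lemma~\ref{bound} — which controls how far $\overline{M^n}$ can stray from $M^n$ — is what I would use to upgrade the containment $M^nS\cap R\subseteq\overline{M^n}$ to the genuine comparison $M^mS\cap R\subseteq M^nR$ for $m\gg n$, and hence to conclude $\ker\alpha=0$; Krull's intersection theorem in the Noetherian ring $\widehat{R}$ is what turns the levelwise estimates into $\bigcap_n M^n\widehat{R}=0$.

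I expect this last comparison of filtrations to be the main obstacle. The difficulty is that integral closure alone only bounds the kernel into the nilradical of $\widehat{R}$, since every nilpotent lies in every $\overline{M^n}$; the multiplicative bound of Lemma~\ref{bound} does not directly see additive separatedness of the inverse limit defining $\ker\alpha$. Reconciling these requires the regular conductor element $b$ to pass from integral-closure estimates back to honest powers $M^nR$, and it is precisely in verifying that $b$ remains well-behaved after completion that the non-Noetherian hypotheses make themselves felt; the passage to the Noetherian completion $\widehat{R}$ via Cohen's theorem is what ultimately makes the Artin--Rees-type machinery available.
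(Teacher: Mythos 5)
Your construction of the canonical map $\alpha:\widehat{R}\to\widehat{S}$ and your proof that $\widehat{S}$ is module-finite over $\widehat{R}$ are both correct. The finiteness argument via the level-wise surjections $(R/M^nR)^d\to S/M^nS$ and the Mittag--Leffler condition on their kernels is a legitimate alternative to the paper's route, which instead observes that $\widehat{S}/M\widehat{S}=S/MS$ is finite over $\widehat{R}/M\widehat{R}$ and $\widehat{R}$ is complete, and then quotes \cite[Theorem 8.4, p.~58]{Ma}.

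The injectivity of $\alpha$, however, is where the lemma has its content, and you have not proved it. You correctly reduce it to comparing the filtrations $\{M^nS\cap R\}$ and $\{M^nR\}$, and you correctly obtain $M^nS\cap R\subseteq\overline{M^n}$ by the determinant trick; but the only two facts you actually extract --- that every $\xi\in\ker\alpha$ satisfies $\xi^{d}=0$, and that $\hat{b}\,\xi=0$ for a conductor element $b$ --- do not combine to give $\xi=0$. The first merely places $\ker\alpha$ inside the nilradical of $\widehat{R}$, which is typically nonzero; the second is of no use unless $\hat{b}$ is a nonzerodivisor in $\widehat{R}$, and since $R$ is not Noetherian there is no flatness of $R\to\widehat{R}$ to guarantee that a nonzerodivisor of $R$ survives in the completion (a difficulty you acknowledge but do not resolve). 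The sentences ``is what I would use to upgrade the containment\dots'' and ``I expect this last comparison of filtrations to be the main obstacle'' are standing in for precisely the missing proof. The paper closes this step differently: using the integrality of $S$ over $R$ inside $Q(R)$ it identifies $M^nS\cap R$ with $\overline{M^n}$, applies Lemma~\ref{bound} with $I=M$ to produce a single exponent $k$, independent of $n$, with $(\overline{M^n})^{k}\subseteq M^n$ for all $n$ (this uniformity, coming from Rees's theorem $\overline{J^{n+\ell}}\subseteq J^n$ in the Noetherian ring $\widehat{R}/N$, is exactly what your element-by-element bound $x^d\in M^nR$ does not deliver, since $x^d\in M^n$ for each $x$ separately does not control products of distinct elements of $M^nS\cap R$, let alone the ideal itself), concludes that the $M$-adic topology on $R$ coincides with the subspace topology induced by the $MS$-adic topology on $S$, and then cites \cite[Theorem 8.1, p.~57]{Ma} for the embedding of completions. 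Until you either carry out the filtration comparison $M^mS\cap R\subseteq M^nR$ for $m\gg n$ or reproduce the paper's topological argument, the main assertion of the lemma remains unproved.
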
  

\begin{proof} We claim that the $M$-adic topology on $R$ is the  subspace topology on $R$ induced by the $MS$-adic topology on $S$. Since $S$ is integral over $R$ with $S \subseteq Q(R)$, we have for each $n>0$ that $\overline{M^n} = M^nS \cap R$.  Thus the filtration $\{\overline{M^n}:n >0\}$ is a basis of open neighborhoods of $0$ for the subspace topology on $R$.  
 By Lemma~\ref{bound} there exists $k>0$ such that $\overline{M^n}^k \subseteq M^n$ for every $n>0$, so it follows that $\{\overline{M^n}:n >0\}$ is a basis of open neighborhoods of $0$ for the $M$-adic topology. This shows that the $M$-adic topology on $R$ is the  subspace topology on $R$ induced by the $MS$-adic topology on $S$. 
Hence the canonical mapping $\widehat{R} \rightarrow \widehat{S}$ is an embedding of rings \cite[Theorem 8.1, p.~57]{Ma}. Since $ \widehat{S}/M\widehat{S} = S/MS $ is a finite extension of $\widehat{R}/M\widehat{R} = R/MS$ and $\widehat{R}$ is complete, we have  that $\widehat{S}$ is a finite extension of $\widehat{R}$ \cite[Theorem 8.4, p.~58]{Ma}.  
\end{proof} 

The Krull-Akizuki Theorem states that every integral extension of a one-dimen\-sion\-al Noetherian domain inside a finite extension of its quotient field is a Noetherian domain.  
Using an approach to the Krull-Akizuki Theorem due to Matijevic \cite{Mati}, we 
 next give a version of this theorem that is suited to our more general context. 
For an ideal $I$ of a ring $R$, we denote by $T(I)$ the {\it Nagata transform} of $I$; that is, $T(I)  = \bigcup_{k>0} (R:_{Q(R)}I^k)$.

\begin{lemma} \label{Mati} Let  $R$ be a local ring with finitely generated regular maximal ideal $M$ that is the radical of a principal ideal. If  $S$ is an integral extension  of $R$ in $ T(M)$,  then $S$ is semilocal and the maximal ideals of $S$ are finitely generated. 
\end{lemma}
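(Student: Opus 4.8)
\noindent\textit{Plan.} The plan is to reduce the whole statement to one finite-length computation. Writing $M=\sqrt{xR}$, regularity of $M$ forces the generator $x$ to be a nonzerodivisor: if $y\in M$ is a nonzerodivisor, then $y^n=xz$ for some $n$ and some $z$, and a relation $xw=0$ would give $y^nw=0$, so $w=0$. Since $M$ is finitely generated, $M^n\subseteq xR$ for some $n$, and because $x\in M$ one checks directly that the Nagata transform satisfies $T(M)=\bigcup_{k>0}(R:_{Q(R)}M^k)=R[1/x]$; in particular $S\subseteq R[1/x]$, so every element of $S$ is carried into $R$ by a power of $x$. Two further facts are immediate: $R/xR$ is Artinian, since its maximal ideal $M/xR$ is finitely generated and nilpotent while the successive quotients $M^i/M^{i+1}$ are finite-dimensional over the field $R/M$; and every maximal ideal of $S$ contains $x$, because $S$ is integral over the local ring $(R,M)$, so each maximal ideal of $S$ contracts to $M\ni x$. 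Hence the maximal ideals of $S$ correspond bijectively to those of $S/xS$, and each is finitely generated once $S/xS$ is Noetherian. Thus it suffices to prove that $S/xS$ has finite length as an $R$-module.

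\noindent I would extract this from a uniform bound over finite subextensions. Write $S=\bigcup_i S_i$ as the directed union of its finitely generated $R$-subalgebras; each $S_i$ is integral and finitely generated, hence a finite $R$-module, and since $S_i\subseteq R[1/x]$ a power of $x$ carries $S_i$ into $R$, so the finite module $S_i/R$ is supported only at $M$ and has finite length. Applying the snake lemma to multiplication by $x$ on $0\to R\to S_i\to S_i/R\to 0$, and using that $x$ is a nonzerodivisor on $R$ and on $S_i\subseteq Q(R)$, gives the exact sequence
$$0\to {}_x(S_i/R)\to R/xR\to S_i/xS_i\to (S_i/R)/x(S_i/R)\to 0,$$
where ${}_x(-)$ denotes the $x$-torsion submodule. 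For any finite-length module $N$ one has $\ell_R(N/xN)=\ell_R({}_xN)$ (the Euler characteristic of $N\xrightarrow{x}N$ vanishes), so the two contributions coming from $S_i/R$ cancel and $\ell_R(S_i/xS_i)=\ell_R(R/xR)$ for every $i$.

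\noindent This uniform bound is the crux, and pinning it down is the step I expect to be the main obstacle: one might naively expect $\ell_R(S_i/xS_i)$ to grow as $S_i$ enlarges, and it is precisely the hypothesis $S\subseteq T(M)=R[1/x]$---which makes $S_i/R$ torsion and forces the cancellation---that prevents this. This is the heart of Matijevic's approach to Krull--Akizuki. Granting it, the proof finishes quickly: $S/xS=\bigcup_i\mathrm{im}(S_i\to S/xS)$ is a directed union of modules each of which is a quotient of some $S_i/xS_i$, hence of length at most $\ell_R(R/xR)$; so any finite strict chain of submodules of $S/xS$ already lies in one such image and has length at most $\ell_R(R/xR)$, whence $\ell_R(S/xS)\le\ell_R(R/xR)<\infty$. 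Therefore $S/xS$ is an Artinian ring; it has finitely many maximal ideals, which correspond to the maximal ideals of $S$, so $S$ is semilocal. Finally, for a maximal ideal $\mathfrak n$ of $S$ the ideal $\mathfrak n/xS$ is finitely generated over the Artinian ring $S/xS$; lifting its generators and adjoining $x$ exhibits $\mathfrak n$ as finitely generated, completing the proof.
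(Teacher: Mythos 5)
Your proof is correct in substance, but it takes a genuinely different route from the paper's. The paper follows Matijevic's stabilization argument: setting $I_t=(m^tS\cap R)+mR$, it uses the Artinian ring $R/mR$ to find $n$ with $I_n=I_{n+k}$ for all $k$, deduces the single containment $S\subseteq Rm^{-n}+mS$, and concludes directly that $S/mS$ is a finitely generated module over the Noetherian ring $R/mR$ — no length computations at all. You instead run the Bourbaki-style Krull--Akizuki argument: decompose $S$ as a directed union of module-finite subextensions $S_i$, use the vanishing Euler characteristic of multiplication by $x$ on the finite-length module $S_i/R$ to get the uniform (indeed exact) bound $\ell_R(S_i/xS_i)=\ell_R(R/xR)$, and pass to the limit. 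Your approach buys a quantitative bound $\ell_R(S/xS)\leq\ell_R(R/xR)$ and isolates cleanly why $S\subseteq T(M)=R[1/x]$ is the essential hypothesis; the paper's approach is more self-contained and avoids length functions entirely. Everything surrounding the key step checks out (the nonzerodivisor claim, $T(M)=R[1/x]$, $R/xR$ Artinian, the lying-over argument, the lift of generators of $\mathfrak n/xS$). One sentence needs repair, though it is a standard and one-line fix rather than a gap: a finite strict chain $N_0\subsetneq\cdots\subsetneq N_r$ of submodules of $S/xS$ need \emph{not} lie inside a single image $V_i=\mathrm{im}(S_i\to S/xS)$, since a submodule of a directed union is not generally contained in one term. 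The correct argument is to choose witnesses $a_j\in N_j\setminus N_{j-1}$, take $i$ large enough that all $a_j\in V_i$, and observe that $N_0\cap V_i\subsetneq\cdots\subsetneq N_r\cap V_i$ is then a strict chain in $V_i$, giving $r\leq\ell_R(V_i)\leq\ell_R(R/xR)$ as desired.
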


\begin{proof}  The proof is based on an argument due to Matijevic \cite[Theorem]{Mati}. Let $m$ be an element of $R$ such that $M = \sqrt{mR}$.   
 We show first that there is $n>0$ such that 
  $S \subseteq Rm^{-n} + mS$. 
For each $t>0$, let $I_t = (m^tS \cap R) + mR$.   Since $R/mR$ is an Artinian ring, there is $n>0$ such that $I_n = I_{n+k}$ for all $k>0$.  We show that $S \subseteq m^{-n}R +mS$ for this $n$. 
 Suppose to the contrary there is $s \in S$ such that $s \not \in m^{-n}R+mS$. 
  Since $S \subseteq R[1/m]$, there is a minimal choice of $k>n$ such that $s \in m^{-k}R+mS$. Let $r \in R$ and $x \in S$ such that $m^ks = r + m^{k+1}x$. Then $m^k(s-mx) \in m^kS \cap R \subseteq I_k$. Since $k>n$, we have by the choice of $n$ that $I_k = I_{k+1}$, so that $m^k(s-mx) \in (m^{k+1}S \cap R) + mR$.
    Let $u \in S$ and $a \in R$ such that $m^k(s-mx) = m^{k+1}u+ma$. Then $m^ks = m^{k+1}(x+u) + ma$. Since $m$ is a nonzerodivisor in $S$, $s \in mS + m^{-(k-1)}R$, contrary to the choice of $k$. 
  Therefore, $S \subseteq Rm^{-n}+mS$.

  Now $(Rm^{-n}+mS)/mS$ is  finitely generated as a module over the Noetherian ring $R/mR$, so the $R/mR$-submodule $S/mS$ is also  finitely generated, and hence $S/mS$ is a Noetherian ring.  Since $S$ is integral over $R$, every maximal ideal $N$ of $S$ contains $mS$. Since also $N/mS$ is a finitely generated ideal of $S/mS$, we conclude that $N$ 
   is a finitely generated ideal of $S$.  Since $S$ is integral over $R$,  $mS$ is a zero-dimensional ideal of $S$, and hence $S/mS$ is a zero-dimensional Noetherian, hence semilocal, ring. This proves $S$ is a semilocal ring with finitely generated maximal ideals.  
\end{proof}

The next theorem is our main principal ideal theorem for the case in which the maximal ideal is regular.



\begin{theorem} \label{pre stable} Let $R$ be a  local ring with finitely generated regular maximal ideal $M$,   let $R'$ be the integral closure of $R$ in $T(M)$ and let  $\overline{R}$ be the integral closure of $R$ in $Q(R)$.  Then the following are equivalent. 
\begin{itemize}

\item[(1)] $M$ is the radical of a principal  ideal of $R$. 


\item[(2)] $\widehat{R}$ is a one-dimensional Cohen-Macaulay ring. 

\item[(3)] $R'$  is Pr\"ufer in dimension zero.

\item[(4)] There is an integral overring of $R$ that is Pr\"ufer in dimension zero.




\item[(5)] Every $M$-primary ideal of $R$ extends to a principal ideal of $\overline{R}$.


\item[(6)] $\Jac \overline{R}$   is the radical of a principal ideal of $\overline{R}$.


\item[(7)] $R'$ is a semilocal ring in which every maximal ideal is  principal.

\item[(8)]  There is an integral overring of $R$ in which  every maximal ideal is  principal.

\item[(9)] $T(M)$ is a semilocal ring for which $MT(M) = T(M)$.



\end{itemize}
\end{theorem}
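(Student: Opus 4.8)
The plan is to use (1)$\Leftrightarrow$(2) as a bridge to the $M$-adic completion, to prove the substantive equivalences among the integral-closure conditions (3), (7) and the overring conditions (4), (8) by combining the semilocal structure from Lemma~\ref{Mati} with the Pr\"ufer-in-dimension-zero machinery of Section~2 (Theorem~\ref{new lemma}, Corollary~\ref{Prufer subring local}, Lemma~\ref{n generator}), and finally to attach (5), (6), (9) as translations into statements about $\overline{R}$ and the Nagata transform $T(M)$. Throughout I fix $m\in R$ with $M=\sqrt{mR}$ whenever (1) is available; since $M$ is regular, a power of a nonzerodivisor of $M$ lies in $mR$, so $m$ itself is a nonzerodivisor.

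For (1)$\Rightarrow$(2), Theorem~\ref{pre pre stable} already gives that $\widehat{R}$ is Noetherian of Krull dimension at most one, and I would upgrade this to one-dimensional Cohen--Macaulay in two steps using regularity. First, $m$ is not nilpotent in $\widehat{R}$: if $m^s\in\bigcap_n M^n$, then since $M^k\subseteq mR$ forces $\bigcap_n M^n\subseteq\bigcap_n m^nR$, one gets $m^s\in m^{s+1}R$, whence $m(1-mr)=0$ and $m$ is a unit, a contradiction; hence $\dim\widehat{R}=1$. Second, $m$ remains a nonzerodivisor in $\widehat{R}$, which I would deduce from the uniform colon bound $(M^N:_R m)\subseteq M^{\lfloor N/k\rfloor-1}$: indeed $ma\in M^N\subseteq m^{\lfloor N/k\rfloor}R$ lets one cancel $m$ to get $a\in M^{\lfloor N/k\rfloor-1}$, so a coherent sequence annihilated by $m$ has all components eventually in arbitrarily high powers of $M$ and is therefore $0$. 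Depth one in a one-dimensional Noetherian local ring is exactly Cohen--Macaulayness. The converse (2)$\Rightarrow$(1) is immediate from Theorem~\ref{pre pre stable}, a one-dimensional ring having dimension at most one.

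The heart of the proof is the cluster (3)$\Leftrightarrow$(7) and its link to (1), and I expect (1)$\Rightarrow$(3) to be the main obstacle. Granting (1), Lemma~\ref{Mati} shows $R'$ is semilocal with finitely generated maximal ideals, and for each maximal ideal $N$ the localization $R'_N$ is local with finitely generated regular maximal ideal equal to $\sqrt{mR'_N}$ (because $R'/mR'$ is integral over the Artinian ring $R/mR$). By Corollary~\ref{Prufer subring local} it then suffices to show that $(R'_N)^{*}=R'_N/\bigcap_k m^kR'_N$ is a one-dimensional valuation domain; this is a non-Noetherian Krull--Akizuki phenomenon, and it is precisely what Section~2 and Lemmas~\ref{bound} and~\ref{lift} were built for. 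I would establish it by transferring to the completion: $\widehat{R'_N}$ is Noetherian of dimension at most one by Theorem~\ref{pre pre stable}, its reduction is one-dimensional, and the integral closedness of $R'$ in $T(M)$ — passed along module-finite subextensions via Lemma~\ref{lift}, with the $M$-adic topology controlled by the Artin--Rees substitute Lemma~\ref{bound} — forces the reduction to be normal, hence (being local and one-dimensional) a valuation domain. Given (3), the equivalence (3)$\Leftrightarrow$(7) is then formal: localizing, $(R'_N)^{*}$ is a valuation domain, so its maximal ideal is principal, and Lemma~\ref{n generator} lifts this to make $NR'_N$ principal, whence the invertible ideal $N$ is principal in the semilocal ring $R'$; conversely a principal regular maximal ideal makes each $(R'_N)^{*}$ a one-dimensional valuation domain (every nonzero element is a unit times a power of the regular generator), so Corollary~\ref{Prufer subring local} returns (3).

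It remains to close the loops. The implications (3)$\Rightarrow$(4) and (7)$\Rightarrow$(8) are trivial, taking $R'$ as the overring, and (8)$\Rightarrow$(4) is local: a principal regular maximal ideal makes each localization Pr\"ufer in dimension zero by the valuation-domain argument above, hence so is the whole ring. For all the converse arrows — (4)$\Rightarrow$(1), (5)$\Rightarrow$(1), (6)$\Rightarrow$(1), (9)$\Rightarrow$(1) — I would use one uniform device: the hypothesis makes $M$ (or an $M$-primary ideal, or $\Jac\overline{R}$) extend to an invertible, hence principal, ideal of an integral overring whose radical is the Jacobson radical; descending to a module-finite subextension $S_0$, Lemma~\ref{lift} makes $\widehat{S_0}$ a module-finite, hence Noetherian, extension of $\widehat{R}$ in which the extended ideal is principal with radical $\Jac\widehat{S_0}$, so Krull's PIT gives $\dim\widehat{S_0}\le 1$ and therefore $\dim\widehat{R}\le 1$, which is (2). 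The forward spokes are direct computations: from (7) every $M$-primary ideal extends to a principal ideal of $R'\subseteq\overline{R}$, giving (5); since $\overline{R}$ is integral over $R$ one has $\Jac\overline{R}=\sqrt{M\overline{R}}=\sqrt{m\overline{R}}$, giving (6); and for (9) one checks $T(M)=R[1/m]=R'[1/m]$ (since $t\in(R:M^k)$ forces $m^kt\in R$), so $MT(M)=T(M)$ because $m\cdot(1/m)=1$, while $T(M)$ is semilocal with maximal ideals the finitely many largest nonmaximal primes of $R'$ supplied by Theorem~\ref{new lemma}.
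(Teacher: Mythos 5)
Your architecture matches the paper's in outline, and several spokes are fine: the treatment of (1)$\Rightarrow$(2) (non-nilpotence plus a colon/Cauchy argument to make $m$ a nonzerodivisor in $\widehat{R}$), the trivial arrows, the forward computations for (5), (6), and the ``complete a module-finite subextension and invoke Krull's PIT'' device for (6)$\Rightarrow$(1) are all essentially what the paper does. But the arrow you correctly identify as the heart of the theorem, (1)$\Rightarrow$(3), is where your proposal has a genuine gap. The paper's mechanism is the Eakin--Sathaye stability result: given a finitely generated zero-dimensional ideal $I$ of $R'$, one manufactures an $M$-primary ideal $J$ of $R$ with $JR'=m^{\ell}I$, uses Theorem~\ref{pre pre stable} to get $J^{2n}=xJ^n$, applies \cite[Lemma, p.~447]{ES} to conclude $J^n$ is principal over $\End(J^n)$, and then observes that $\End(J^n)$ is integral over $R$ and contained in $R[1/m]=T(M)$, hence contained in $R'$ --- this containment is the entire reason the integral closure is taken in $T(M)$ rather than in $Q(R)$. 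Your proposal never invokes this; instead you assert that $(R'_N)^{*}=R'_N/\bigcap_k m^kR'_N$ is a one-dimensional valuation domain because the completion is Noetherian of dimension one and ``integral closedness forces the reduction to be normal.'' Three separate claims are buried there, none of which you can currently justify: (a) that $\bigcap_k m^kR'_N$ is prime (equivalently that $(R'_N)^{*}$ is a domain) is exactly Theorem~\ref{new lemma}(2), whose proof in the paper \emph{assumes} the ring is Pr\"ufer in dimension zero --- so using it here is circular; (b) that $(R'_N)^{*}$ is one-dimensional is in substance Theorem~\ref{at least one}, which the paper derives \emph{from} Theorem~\ref{pre stable}; and (c) ``local, one-dimensional and integrally closed implies valuation domain'' is false for non-Noetherian rings in general, and the Noetherianity you would need (via reducedness and one-dimensionality, per \cite[Corollary 1.21]{GHR}) presupposes (a) and (b). So the proposed route does not close without importing the Eakin--Sathaye argument or an equivalent.

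A secondary, repairable point: your ``one uniform device'' for the converse arrows does not apply to (9)$\Rightarrow$(1), because $T(M)$ is not an integral overring and the hypothesis $MT(M)=T(M)$ does not hand you a principal ideal anywhere; the paper instead uses prime avoidance on the finitely many maximal ideals of $T(M)$ (none of which can lie over $M$) to produce $m\in M$ that is a unit in $T(M)$, whence $M^k\subseteq mR$. That fix is routine, but as written the arrow is not covered.
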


\begin{proof}
(1) $\Rightarrow$ (2) 
By (1), there exists a nonzerodivisor $m \in M$ such that $M = \sqrt{mR}$. 
By Theorem~\ref{pre pre stable},  $\widehat{R}$ is a Noetherian ring of Krull dimension at most one. Also, the maximal ideal  $M\widehat{R}$ of $\widehat{R}$ is the radical of the principal ideal   ${m\widehat{R}}$.   To see that the image of $m$ in $\widehat{R}$ is a nonzerodivisor, suppose that $x \in \widehat{R}$ with $xm = 0$.  Let $\{x_i\}_{i=1}^\infty$ be a Cauchy sequence of elements in $R$ whose image in $\widehat{R}$ converges to $x$. Since $M$ is a finitely generated ideal and $M  = \sqrt{mR}$, 
 the $mR$-adic and $M$-adic topologies agree on $R$.  Thus we can assume that 
 $x_{i+1}-x_i \in m^iR$ for all $i>0$. Also, since $xm = 0$, we have that 
 for each $n>0$, there exists $i_n>n$ such that $x_{i_n}m \in m^nR$.  Since $m$ is a nonzerodivisor in $R$, we have $x_{i_n} \in m^{n-1}R$. Since also $x_{i_n} - x_{i_{n-1}} \in m^{i_{n-1}} R \subseteq m^{n-1}R$, we have then that $x_{i_n -1} \in m^{n-1}R$.  Thus $\{x_{i_n}\}_{n=1}^\infty$ is a Cauchy subsequence of $\{x_i\}_{i=1}^\infty$ whose image in $\widehat{R}$ converges to $0$. Therefore, $x = 0$,   
which proves that   the image of $m$ in $\widehat{R}$ is a nonzerodivisor.  We conclude that 
 $\widehat{R}$  is a one-dimensional Cohen-Macaulay ring.

 (2) $\Rightarrow$ (3)  
By Theorem~\ref{pre pre stable}, there is  $m \in M$ such that $M = \sqrt{mR}$.  
 Since $M$ is a regular ideal, $m$ is a nonzerodivisor of $R$ and  $T(M) = R[1/m]$.  To show that $R'$ is Pr\"ufer in dimension zero, 
%
 let $I$ be a finitely generated zero-dimensional ideal of $R'$, say $I = (x_1,\ldots,x_t)R'$. We show $I$ is invertible. Assume $I \ne R'$.  
   Since $R'$ is an integral extension of $R$, $\Jac R' = \sqrt{mR'}$, and hence  $m^k \in I$ for some $k>0$. Write $m^k = \sum_{i=1}^t \alpha_i x_i$, where $\alpha_i \in R'$.  Since $I \subseteq R' \subseteq R[1/m]$, 
    there is $\ell>0$ such that $m^\ell x_i, m^\ell \alpha_i \in R$ for all $i$. Let $J=m^\ell (x_1,\ldots,x_n)R$. Then $J$ is an ideal of $R$ and  $$m^{k+\ell} = m^\ell(\sum_{i=1}^t \alpha_i x_i) = \sum_{i=1}^t (m^\ell \alpha_i)x_i \in (x_1,\ldots,x_t)R.$$ Thus $m^{k+2\ell} \in (m^\ell x_1,\ldots, m^\ell x_t)R \subseteq R$. 
      If $J = R$, then $1 \in JR' =  m^\ell I \subseteq m^\ell R'$, contrary to the fact that $m \in \Jac R'$. Thus $J$ is a proper ideal of $R$.   
    Since also $m^{k+2\ell} \in J$, we have that $J$ is an $M$-primary ideal of $R$.
   
    By Theorem~\ref{pre pre stable},
        there is $n>0$ such that $J^{2n} = xJ^n$ for some $x \in J^n$. 
   By \cite[Lemma, p.~447]{ES}, $J^n$ is a principal ideal of $\End(J^n):=(J^n:_{Q(R)}J^n)$. Since $J^n$ is a finitely generated ideal of $R$, $\End(J^n)$ is integral over $R$  \cite[Lemma 2.1.8]{SH}. 
   But $\End(J^n) \subseteq R[1/ m]$ since $JR[1/m]=R[1/m]$, so necessarily $\End(J^n) \subseteq R'$. Thus since $J^n$ is a principal ideal of $\End(J^n)$, we have that $J^nR'$ is a principal ideal of $R'$.  Now $J^nR' = m^{\ell n} I^n$, so since $m$ is a nonzerodivisor and $J^n$ is  a principal ideal of $R'$ it must be that $I^n$ is a principal ideal of $R'$.
  Since some power of $m$ is in $I^n$, the ideal $I^n$ is regular.  
    Hence $I^n$ is an invertible ideal of $R'$, and so  $I$ is also an invertible ideal of $R'$.  
   This shows that $R'$ is Pr\"ufer in dimension zero. 
   
   (3) $\Rightarrow$ (4) This is clear. 
   
   (4) $\Rightarrow$ (5) Let $S$ be an integral overring of $R$ that is Pr\"ufer in dimension zero. Let $I$ be an $M$-primary ideal of $R$.  Since $M$ is finitely generated, so is $I$. As a finitely generated zero-dimensional ideal of $R$, $IS$ is a finitely generated regular, hence invertible, ideal of $S$.  Thus $1 \in (S:_{Q(R)}I)I$.  Write $I = (x_1,\ldots,x_n)R$.  Then there exist $q_1,\ldots,q_n \in (S:_{Q(R)}I)$ such that $1 = \sum_i q_ix_i$.  Let $S' = R[q_ix_j:i,j \leq n]$.  Then $S' \subseteq S$ and each $q_i \in (S':_{Q(R)}I)$.  Thus $1 \in (S':_{Q(R)}I)I$, so that $IS'$ is an invertible ideal of $S'$.  Since $S'$ is a module-finite extension of $R$, $S'$ is a semilocal ring. Thus $IS'$ is a principal ideal of $S'$, which implies $IS$ is a principal ideal of $S$.  Since $S \subseteq \overline{R}$, statement (5) now follows.

   (5) $\Rightarrow$ (6) By (5), $M\overline{R}$ is a principal ideal of $\overline{R}$. Since $\overline{R}$ is integral over $R$, we have $\Jac \overline{R} = \sqrt{M\overline{R}}$, which verifies (6).  
   
     
     (6) $\Rightarrow$ (1) Let $x \in \overline{R}$ such that  $\Jac \overline{R}$ is the radical of  ${x\overline{R}}$, and let $S = R[x]$.  Then $S$ is a module-finite extension of $R$ with $\Jac S = \sqrt{xS}$.     
   By Lemma~\ref{lift} the canonical homomorphism $\widehat{R} \rightarrow \widehat{S}$ is an embedding and $\widehat{S}$ is integral over the image of $\widehat{R}$ in $\widehat{S}$.  Since    
    $\Jac \widehat{S} = (\Jac S)\widehat{S}$, it follows that $\Jac \widehat{S}$ is the radical of a principal ideal of $\widehat{S}$.  As a finite extension of the Noetherian ring $\widehat{R}$,  the ring $\widehat{S}$ is  Noetherian. Therefore, Krull's PIT implies that $\dim \widehat{S} \leq 1$.  Since $\widehat{S}$ is integral over $\widehat{R}$, we conclude that $\dim \widehat{R} \leq  1$.  By Theorem~\ref{pre pre stable} the maximal ideal of $R$ is the radical of a principal ideal of $R$.

    (3) $\Rightarrow$ (7)  
    Assume (3). We have shown that (3) is equivalent to (1), so $M$ is the radical of a principal ideal of $R$.  By Lemma~\ref{Mati}, $R'$ is semilocal and the maximal ideals of $R'$ are finitely generated. Since $R'$ is Pr\"ufer in dimension zero, the maximal ideals of $R'$ are invertible. Since $R'$ is semilocal, these maximal ideals are  principal. 
    
    (7) $\Rightarrow$ (8) This is clear.

      (8) $\Rightarrow$ (9)  Let $S$ be an integral overring of $R$ for which the maximal ideals of $S$ are principal. Then $S/(\Jac R)S$ is a zero-dimensional ring with principal maximal ideals, and hence $S/(\Jac R)S$ is Artinian.  Since $S$ is integral over $R$, we have $\Jac R \subseteq \Jac S$,  so that  $S/\Jac S$ is Artinian. In particular,  
        $S$ is semilocal. Again using the fact that the maximal ideals of $S$ are principal, 
    we have that    $S$ is Pr\"ufer in dimension zero (indeed, every zero-dimensional ideal is a regular principal ideal). By Theorem~\ref{new lemma}(1),       each maximal ideal 
     of $S$ contains a unique largest nonmaximal prime ideal, so there
      are finitely many prime ideals $P_1,\ldots,P_k$ that are maximal among the set of nonmaximal prime ideals of $S$, and each nonmaximal prime ideal is contained in one of the $P_i$. 
     
      Since $S$ is an integral extension of $R$, 
   each nonmaximal prime ideal $P$ of $R$ is contained in $P_i \cap R$ for some $i$. Also since $S$ is integral over $R$, no $P_i$ lies over $M$. By Prime Avoidance, there is $m \in M \setminus (P_1 \cup \cdots \cup P_k)$. Thus $M = \sqrt{mR}$ and, since $M$ is regular, $m$ is a nonzerodivsor of $R$. Thus $T(M) = R[1/m]$ and the maximal ideals of $T(M)$ are all of the form $(P_i \cap R)T(M)$. This shows that $T(M)$ is semilocal. Since $T(M) = R[1/m]$, we have $MT(M) = T(M)$, which verifies (9).

(9) $\Rightarrow$ (1)  Let $P_1,\ldots,P_n$ denote the maximal ideals of $T(M)$.  Since $MT(M) = T(M)$,  none of the $P_i$ lie over $M$, so Prime Avoidance implies there exists $m \in M \setminus (P_1 \cup \cdots \cup P_n)$.  Then $m$ is a unit in $T(M)$. Let $k>0$ such that $1/m \in (R:_{Q(R)}M^k)$. Then $M^k \subseteq mR \subseteq M$, so $M = \sqrt{mR}$, which verifies (1). 
   \end{proof}


\begin{corollary} \label{int closed} Let $R$ be a local ring with finitely generated regular  maximal ideal $M$ and such that $R$ is integrally closed in $Q(R)$.   Then $M$ is the radical of a principal ideal if and only if $M$ is a principal ideal. 
\end{corollary}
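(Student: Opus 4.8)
The plan is to deduce the corollary directly from Theorem~\ref{pre stable}, the point being that integral closedness forces the auxiliary overring $R'$ appearing there to collapse onto $R$ itself, after which the statement is just a reading of that theorem in a degenerate case.

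One implication requires no hypotheses on $R$ beyond the standing ones: if $M = mR$ is principal, then $M = \sqrt{M} = \sqrt{mR}$, so $M$ is the radical of a principal ideal.

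For the converse, suppose $M$ is the radical of a principal ideal. First I would check that $R'$, the integral closure of $R$ in $T(M)$, coincides with $R$. Indeed $T(M) = \bigcup_{k>0}(R:_{Q(R)}M^k) \subseteq Q(R)$, so any element of $T(M)$ that is integral over $R$ lies in the integral closure of $R$ in $Q(R)$; since $R$ is integrally closed in $Q(R)$, this forces $R' = R$. With this identification in hand I would invoke the equivalence of (1) and (7) in Theorem~\ref{pre stable}: because $M$ is the radical of a principal ideal, $R'$ is a semilocal ring in which every maximal ideal is principal. But $R' = R$ is local with maximal ideal $M$, so $M$ is principal, as desired. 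Alternatively one could use the equivalence of (1) and (3): then $R = R'$ is Pr\"ufer in dimension zero, whence the finitely generated regular zero-dimensional ideal $M$ is invertible and therefore principal in the local ring $R$.

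I expect no real obstacle here beyond the verification that $R' = R$; once the overring collapses, the corollary is simply Theorem~\ref{pre stable} applied in the case where the relevant integral extension is trivial. The only mild care needed is to confirm $T(M) \subseteq Q(R)$ so that the hypothesis of integral closedness in $Q(R)$ genuinely applies to the elements being tested for integrality.
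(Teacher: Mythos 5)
Your proposal is correct and follows essentially the same route as the paper: the paper's proof simply cites Theorem~\ref{pre stable} together with integral closedness, and the content you supply---that $T(M)\subseteq Q(R)$ forces $R'=R$, after which statement (7) (or (3)) of that theorem yields that $M$ is principal---is exactly the intended reading. The verification that the overring collapses is the only nontrivial point, and you handle it correctly.
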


\begin{proof} 
If $M$ is the radical of a principal ideal, then, since $R$ is integrally closed, Theorem~\ref{pre stable} implies that $M$ is a principal ideal.
The converse is clear. 
\end{proof}



\section{Generalized local rings}

In this section we consider principal ideal theorems for generalized local rings.  
The motivation for this case is  the last reduction discussed in the introduction, that of the fact that it always possible to pass from a local ring  $R$ with a finitely generated regular maximal ideal which is the radical of a principal ideal to  a generalized local ring $A$ with this same property. This is done by showing in Theorem~\ref{glr theorem} that every such ring $R$ arises  as a pullback  involving a generalized local ring $A$ and a semilocal ring $T$.

\begin{theorem} \label{glr theorem} Let $R$ be a local ring. The maximal ideal of $R$ is  finitely generated, regular and the radical of a  principal ideal if and only if $R$ occurs in a pullback diagram of the form

\begin{center} $\begin{CD} R = \beta^{-1}(A) @>>> A \\ @VVV \: @VV{\iota}V \\
   T @>{\beta}>> B \end{CD}$ \end{center}
where $A$, $B$ and $T$ are rings such that 
\begin{itemize}
\item[(a)] $\beta:T \rightarrow B$ is a surjective ring homomorphism,  $A$ is a subring of $B$ and  $\iota:A \rightarrow B$ is the inclusion map,
\item[(b)] $A$ is a generalized local ring with regular maximal ideal that is the radical of a principal ideal,
\item[(c)] $B$ is the Nagata transform of the maximal ideal of $A$, and
\item[(d)] $T$ is a (necessarily semilocal) ring.

\end{itemize}
\end{theorem}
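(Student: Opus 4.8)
The plan is to treat the two implications separately: for the forward direction I will manufacture an explicit pullback, and for the converse I will recover the ideal structure of the maximal ideal $M$ of $R$ from the abstract pullback data. Throughout set $C=\bigcap_{i>0}M^{i}$ and fix a nonzerodivisor $m$ with $M=\sqrt{mR}$.

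Suppose first that $M$ is finitely generated, regular, and the radical of $mR$. I would take $A=R/C$, $T=T(M)=R[1/m]$, $B=T/C$, with $\beta\colon T\to T/C$ the canonical surjection and $\iota\colon A\hookrightarrow B$ induced by $R\subseteq T$. The first point is that $C$ is an ideal of $T$: since $M$ is finitely generated there is $N$ with $M^{N}\subseteq mR$, so any $c\in C$ lies in $M^{i+N}\subseteq mM^{i}$ for all $i$, whence $c/m\in C$. As $C\subseteq R$, this yields $\beta^{-1}(A)=R+C=R$, which is the pullback identity. That $A=R/C$ is a generalized local ring is exactly the third reduction recorded in the introduction; its maximal ideal $M/C$ is finitely generated and equal to $\sqrt{(m+C)A}$, and $m+C$ is a nonzerodivisor of $A$ because $m$ is a unit on the ideal $C$ of $T$ (if $rm\in C$ then $r=(1/m)(rm)\in C$), giving (b). Item (d) follows from the equivalence $(1)\Leftrightarrow(9)$ of Theorem~\ref{pre stable}, which makes $T(M)=R[1/m]$ semilocal.

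The remaining and most delicate verification in this direction is (c), that $B=T/C$ is the Nagata transform of the maximal ideal of $A$. I would show that the canonical homomorphism $R[1/m]\to(R/C)[1/(m+C)]$ has kernel exactly $C$ — again using that $m$ is a unit on the ideal $C$ — so that $T/C\cong A[1/(m+C)]$; since $m+C$ is a nonzerodivisor and $M/C=\sqrt{(m+C)A}$, the same identification of the Nagata transform of a regular ideal with a localization used in the proof of Theorem~\ref{pre stable} gives $T(M/C)=A[1/(m+C)]$. The care here lies in matching the abstract transform taken inside $Q(A)$ with the concrete ring $T/C$.

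For the converse, assume $R$ is local and sits in such a pullback, and put $C=\ker\beta$. Since $0\in A$ we have $C\subseteq\beta^{-1}(A)=R$, so $C$ is a common ideal with $R/C\cong A$ and $T/C\cong B$; locality of $R$ forces $C\subseteq M$, and $M$ restricts to the maximal ideal $M_{A}=\sqrt{xA}$ of $A$, where $x$ is automatically a nonzerodivisor. The crux is to lift the radical generator: pick $y\in R$ with $\beta(y)=x$. Because $B=T(M_{A})=A[1/x]$, the image $x$ of $y$ is a unit of $B$, so $yt-1\in C$ for some $t\in T$; as $C\subseteq M$ and $R$ is local, $1+C$ consists of units of $R$, so $yt$ and hence $y$ is a unit of $T$. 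Consequently $yC=C$, giving $C\subseteq yR$. Then $M/C=\sqrt{(y+C)A}$ shows every element of $M$ has a power in $yR+C=yR$, so $M=\sqrt{yR}$ with $y$ a nonzerodivisor of $R$ (being a unit of the overring $T$), so $M$ is regular and the radical of a principal ideal. Finally, lifting a finite generating set of $M_{A}$ to $z_{1},\dots,z_{s}\in M$, I would write $y=\sum_{i}r_{i}z_{i}+c$ with $c\in C\subseteq yR$, use locality to absorb the $C$-term and conclude $y\in(z_{1},\dots,z_{s})R$, whence $C\subseteq yR\subseteq(z_{1},\dots,z_{s})R$ and $M=(z_{1},\dots,z_{s})R$. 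I expect this last bootstrap — passing from ``$y$ is a unit of $T$'' to finite generation of $M$ — to be the principal obstacle, since it is where the pullback geometry and the locality of $R$ must be used together.
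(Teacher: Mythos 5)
Your proposal is correct and follows essentially the same route as the paper's proof: the same decomposition $A=R/C$, $T=T(M)=R[1/m]$, $B=T/C$ with $C=\bigcap_i M^i=\bigcap_k m^kR$, the same appeal to Theorem~\ref{pre stable}(9) for semilocality of $T$, and in the converse the same key steps of showing the lifted radical generator is a unit of $T$, deducing $C\subseteq yR$, and then recovering finite generation and regularity of $M$. The only (harmless) deviations are cosmetic: you verify that $m+C$ is a nonzerodivisor by multiplying by $1/m$ inside $T$ rather than by chasing powers of $m$ in $R$, and at the end you absorb the $C$-term with a unit of the local ring rather than simply appending $m$ to the lifted generating set.
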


\begin{proof}
Suppose the maximal ideal $M$ of $R$ is finitely generated, regular and  the radical of a  principal ideal, say $M = \sqrt{mR}$. Then $m$ is a nonzerodivisor in $R$.    Let
$C = \bigcap_{k>0}m^kR$, $A = R/C$ and  $T = T(M)$. Since $M$ is finitely generated and $M  = \sqrt{mR}$, we have $T = R[1/m]$. It follows that $C$ is an ideal of $T$. Let $B = T/C$.  Then $R$ occurs in a pullback diagram as in the statement of the theorem and  (a) is clear. It remains to verify (b), (c) and (d).  

Since $M$ is a finitely generated ideal and $M = \sqrt{mR}$, it follows that $C = \bigcap_{i>0}M^i$. Thus 
 $A$ is a generalized local ring and  $M/C = \sqrt{mR/C}$. To see that $m+C$ is a nonzerodivisor in $A  = R/C$, suppose $r \in R$ such that $rm \in C$. For each $i>1$, $rm \in m^{i+1}R$, so that since $m$ is a nonzerodivisor in $R$, $r \in m^iR$. Thus $r \in C$, which shows that $m+C$ is a nonzerodivisor in $A$. This verifies (b). 
Next, since $M/C = \sqrt{mR/C}$ and $m+C$ is a nonzerodivisor in $R$, we have $T(M/C) = R[1/m]/C = T/C = B$.  Thus (c) is satisfied by our choice of $B$.
That $T$ is semilocal follows from Theorem~\ref{pre stable}(9). Thus (d) holds. 


Conversely, suppose $R$ occurs in a pullback diagram as in the theorem, where (a)--(d) are satisfied. 
 Let $C$ be the kernel of the map $T \rightarrow B$, and observe that $C \subseteq R$.  Without loss of generality we may assume $A = R/C$ and $B = T/C$. Let $m \in M$ such that $M/C$ is the radical of $(mR+C)/C$. 

We claim first that $mT = T$.  Since  $M/C = \sqrt{(mR+C)/C}$ and $M/C$ is a regular ideal, it follows that  $m+C$ is a nonzerodivisor in $A$. Since $B$ is the Nagata transform of $M/C$, we have also that 
 $mB = B$. Thus  $mT + C = T$, and there exist $t \in T$ and $c \in C$ such that $1 = mt + c$. Consequently, $mt = 1 - c \in R$.  Since $c \in M$ and $R$ is local,  $mt$ is a unit in $R$, hence a unit in $T$.  Therefore, $mT = T$. 

Next we claim that $C \subseteq mR$.  Let $c \in C$.  Since $mT = T$, there is $t \in T$ such that $1 = mt$.  Since $C$ is an ideal of $T$ that is contained in $R$, we have  $c = m(tc) \in mR$ for all $c \in C$. Thus $C \subseteq mR$.

Now since $M/C$ is a finitely generated ideal that is the radical of $(mR+C)/C = mR/C$, we conclude there is $k>0$ such that $M^k \subseteq mR$.  Consequently, $M = \sqrt{mR}$.  Since $M/C$ is finitely generated,  there exist $x_1,\ldots,x_n \in M$ such that $M =  (x_1,\ldots,x_n)R + C$. Since $C \subseteq mR$, we have $M = (x_1,\ldots,x_n,m)R$, proving that $M$ is finitely generated.   Finally, since $mT = T$, it follows that $m$ is a nonzerodivisor of $R$. This proves that $M$ is a finitely generated regular ideal that is the radical of a principal ideal. 
\end{proof}

For the rest of the section we focus on generalized local rings that have a maximal ideal that is the radical of a principal ideal. The following examples show that such  rings may have Krull dimension more than one.  

\begin{example} \label{new examples} $\:$
{\em \begin{itemize}

\item[(1)]  (Cahen, Houston and Lucas \cite{CHL}) Let $A$ be a subset of the $p$-adic integers $\widehat{\mathbb{Z}}_p$ that contains $p\widehat{{\mathbb{Z}}}_p$ and elements $\alpha,\beta$ such that the image of $\alpha$ mod $p$ is algebraic over ${\mathbb{Z}}/p{\mathbb{Z}}$ while the image of $\beta$ is transcendental. 
Let  $S$ be the subring of $\widehat{{\mathbb{Q}}}_p[X] $ defined by $S= \{f(x) \in \widehat{\mathbb{Q}}_p[X]:f(A) \subseteq \widehat{\mathbb{Z}}_p\}.$ For each $\gamma \in A$, let ${\ff M}_\gamma$ be the maximal ideal of $S$ given by  ${\ff M}_\gamma = \{f \in S:f(A) \subseteq {\mathbb{Z}}_p\}$.  
Then the ring  $R= {\mathbb{Z}} + ({\ff M}_\alpha \cap {\ff M}_{\beta})$ is a generalized  local ring of Krull dimension 2 with  maximal ideal $M = {\ff M}_\alpha \cap {\ff M}_\beta$ that is minimal over a principal ideal  of $R$. Moreover, the set of principal ideals of $R$ satisfies the ascending chain condition; see the discussion following Proposition 3 of  \cite{CHL}.

\item[(2)]  (Gabelli and Roitman \cite{GR}) It is shown in \cite[Corollary 4.8]{GR} that there exists for each $n>0$ a generalized local domain $R$ of Krull dimension $n$ such that for each ideal $I$ of $R$, there is $x \in I$ such that $I^2 = xI$.  As in (1), this ring has the ascending chain condition on principal ideals. This ring is constructed as a pullback of an intersection of a DVR and a rank $n$ discrete valuation domain. The example in (1) is also such a pullback. Theorem~\ref{at least one} sheds additional light on why in each of these examples one of these valuation rings must have rank one. 

\item[(3)] (Heinzer, Rotthaus and Wiegand \cite{HRW}) 
 Let $k$ be a field, and let $X,Y$ be indeterminates for $k$. In \cite[Chapter~17]{HRW} a non-Noetherian two-dimensional  local domain $S$ is constructed with the properties that the maximal ideal  $N$ of $S$  is a finitely generated ideal  (it can be generated by two elements), $\bigcap_{i}N^i = 0$ and $S$ has  only 3 prime ideals. (What we term ``$S$'' here is the ring  $B/q$ in  
 the discussion of Type II prime ideals following Discussion 17.5 in \cite{HRW}.)
Moreover, $S$ is  the quotient of a ring  between $k[X,Y]$ and $k[[X,Y]]$ that is  dominated by $k[[X,Y]]$, and hence 
 $S$ is a $k$-algebra with residue field $k$. As such, $S = k + N$.  Since $S$ has only finitely many prime ideals it follows that $N$ is the radical of a principal ideal of $S$. 

\end{itemize}}
\end{example}

The examples show that 
Krull's PIT cannot be extended from the class of Noetherian local rings to that of all  generalized local rings.  In Theorem~\ref{count} we give a special case in which the PIT holds  under an additional assumption on the cardinalities  of the set  of   minimal prime ideals of  $R$ and $\widehat{R}$.

\begin{lemma} \label{contraction} If $R$ is a    generalized local ring whose maximal ideal is regular and the radical of a principal ideal, then  each  minimal prime ideal of $R$ and each   one-dimensional prime ideal of ${R}$    is contracted from a minimal prime ideal of $\widehat{R}$.  
\end{lemma}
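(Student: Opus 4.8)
The plan is to treat the two kinds of primes separately, exploiting that by Theorem~\ref{pre stable} the completion $\widehat{R}$ is a one-dimensional Cohen--Macaulay (in particular Noetherian) local ring in which the image of $m$ is a nonzerodivisor and $\Jac \widehat{R} = \sqrt{m\widehat{R}}$, where $m \in M$ satisfies $M = \sqrt{mR}$. Since $\bigcap_{i>0}M^i = 0$, the canonical map $R \to \widehat{R}$ is injective, so I may regard $R \subseteq \widehat{R}$. Two structural facts about $\widehat{R}$ will be used repeatedly: because $\widehat{R}$ is Cohen--Macaulay of dimension one its minimal primes are exactly the primes $\mathfrak{p}$ with $\dim \widehat{R}/\mathfrak{p} = 1$, and no minimal prime contains $m$ (if $m \in \mathfrak{p}$ then $\sqrt{m\widehat{R}} = \Jac \widehat{R} \subseteq \mathfrak{p}$ forces $\mathfrak{p}$ to be the maximal ideal, which has height one).

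For the minimal primes of $R$ I would argue directly with the nilradical. Let $\mathfrak{p}_1,\dots,\mathfrak{p}_s$ be the (finitely many) minimal primes of the Noetherian ring $\widehat{R}$, and set $P_i = \mathfrak{p}_i \cap R$. Since $R \hookrightarrow \widehat{R}$, an element of $R$ is nilpotent in $\widehat{R}$ if and only if it is nilpotent in $R$, so contracting $\mathrm{Nil}(\widehat{R}) = \bigcap_i \mathfrak{p}_i$ gives $\mathrm{Nil}(R) = \bigcap_i P_i$. If $P$ is a minimal prime of $R$, then $P \supseteq \bigcap_i P_i \supseteq \prod_i P_i$, so primeness yields $P_i \subseteq P$ for some $i$; minimality of $P$ forces $P = P_i = \mathfrak{p}_i \cap R$. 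Thus every minimal prime of $R$ is contracted from a minimal prime of $\widehat{R}$.

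For a one-dimensional prime $Q$ of $R$ I would pass to the quotient. The ring $R/Q$ is a local domain whose maximal ideal $M/Q$ is finitely generated, regular, and equal to $\sqrt{(mR+Q)/Q}$, so Theorem~\ref{pre stable} applies to $R/Q$ and shows that $\widehat{R/Q}$ is a nonzero one-dimensional Cohen--Macaulay ring in which the image of $m$ is a nonzerodivisor. The canonical surjection $\widehat{R} \to \widehat{R/Q}$ sends $Q\widehat{R}$ to $0$ and $m$ to this nonzerodivisor, so $m^k \notin Q\widehat{R}$ for every $k>0$; hence the image of $m$ is not nilpotent in $\widehat{R}/Q\widehat{R}$ and $\dim(\widehat{R}/Q\widehat{R}) = 1$. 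I would then choose a prime $\mathfrak{p} \supseteq Q\widehat{R}$ of $\widehat{R}$ with $\dim \widehat{R}/\mathfrak{p} = 1$; since $\height \mathfrak{p} + \dim \widehat{R}/\mathfrak{p} \leq \dim \widehat{R} = 1$, the prime $\mathfrak{p}$ is minimal in $\widehat{R}$. Finally $\mathfrak{p} \cap R$ is a prime containing $Q$ and, because $m \notin \mathfrak{p}$, different from $M$; as $\dim R/Q = 1$ the only primes of $R$ containing $Q$ are $Q$ and $M$, so $\mathfrak{p} \cap R = Q$.

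The main obstacle is the one-dimensional case, and within it the non-nilpotence of the image of $m$ in $\widehat{R}/Q\widehat{R}$ (equivalently, the identification $\widehat{R}/Q\widehat{R} \cong \widehat{R/Q}$): this is where I must invoke Theorem~\ref{pre stable} for the quotient ring $R/Q$ rather than for $R$ itself, using that $R/Q$ again carries a finitely generated regular maximal ideal that is the radical of a principal ideal. The remaining delicate point is that producing a minimal prime of $\widehat{R}$ over $Q$ yields, a priori, only a contraction containing $Q$; the equality $\mathfrak{p} \cap R = Q$ rests crucially on $\dim R/Q = 1$, which is precisely why this argument does not pin the contraction down to $Q$ for primes of higher coheight, and hence why such primes are excluded from the statement.
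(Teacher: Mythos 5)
Your proof is correct, and for the one-dimensional primes it takes a genuinely different route from the paper's. The paper works entirely inside $\widehat{R}$: if $Q\widehat{R}$ were $M\widehat{R}$-primary it would contain $m^k$ for some $k$, and the identity $Q\widehat{R}\cap R=\bigcap_{n>0}(Q+m^nR)$ (the $M$-adic closure of $Q$ in $R$) then gives $m^k\in Q+m^{k+1}R$, hence $m^k(1-rm)\in Q$ for some $r$ and so $m^k\in Q$, contradicting that $Q$ is one-dimensional and $M=\sqrt{mR}$; the endgame (choose a minimal prime $\mathfrak{q}\supseteq Q\widehat{R}$, note $m\notin\mathfrak{q}$, so $Q\subseteq\mathfrak{q}\cap R\subsetneq M$ forces $Q=\mathfrak{q}\cap R$) is identical to yours. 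You instead check that $R/Q$ again satisfies the hypotheses of Theorem~\ref{pre stable} -- which it does, since $m\notin Q$, $R/Q$ is a domain, and $M/Q$ is finitely generated with $M/Q=\sqrt{(mR+Q)/Q}$ -- and use the canonical map $\widehat{R}\to\widehat{R/Q}$, which kills $Q\widehat{R}$ and carries $m$ to a nonzerodivisor of the nonzero ring $\widehat{R/Q}$, to conclude that $m$ is not nilpotent modulo $Q\widehat{R}$. This buys independence from the closure identity $Q\widehat{R}\cap R=\bigcap_{n>0}(Q+m^nR)$, at the price of re-running the implication (1)$\Rightarrow$(2) of Theorem~\ref{pre stable} a second time for the quotient domain; both are legitimate. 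For the minimal primes the paper simply cites the general fact that a minimal prime of a subring is contracted from a minimal prime of any ring extension; your computation with $\mathrm{Nil}(R)=\mathrm{Nil}(\widehat{R})\cap R$ is a concrete instance of this and is equally valid, with the finiteness of $\mathrm{Min}(\widehat{R})$ needed for the product trick supplied by the Noetherianity of $\widehat{R}$.
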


\begin{proof} 
Since $R$ is a generalized local ring, $R$ embeds into $\widehat{R}$.   Thus
 every minimal prime ideal of $R$ is contracted from a minimal prime ideal of $\widehat{R}$. 
 Next, suppose that $P$ is a one-dimensional prime ideal of $R$.  
  We claim    $P\widehat{R}$ is  a height $0$ ideal of $\widehat{R}$. Suppose this is not the case. 
  Let $M$ denote the maximal ideal of $R$, and 
 let $m \in M$ such that $M = \sqrt{mR}$.  By Theorem~\ref{pre stable}, 
  $\widehat{R}$ is a local Noetherian ring of Krull dimension $1$. The fact that $P\widehat{R}$ has height greater than $0$ implies that $P\widehat{R}$ is $M\widehat{R}$-primary. 
Thus
 there is $k>0$ such that $m^k \in P\widehat{R} \cap R = \bigcap_{n>0} (P + m^n{R})$. (We are using here that $M = \sqrt{mR}$, and hence the $M$-adic and $mR$-adic topologies on $R$ coincide.) It follows that   $m^k \in P + m^{k+1}R$, which in turn implies  that $m^k \in P$  since $R $ is a local ring, a contradiction to the fact that $M = \sqrt{mR}$ and $P$ is a one-dimensional ideal of $R$. Thus  $P\widehat{R}$ has height $0$. Let $Q$ be a minimal prime ideal  of $\widehat{R}$ containing $P$. Since $\widehat{R}$ has Krull dimension $1$ and $Q$ is a minimal prime ideal of $\widehat{R}$, we have $m\widehat{R} \not \subseteq Q$ and hence $Q \cap R \subsetneq M$. Since $P \subseteq Q \cap R \subsetneq M$ and $P$ is a one-dimensional prime ideal, we conclude that $P = Q \cap R$.  
\end{proof} 

We denote by Min$(R)$ the set of minimal prime ideals of the ring $R$.  

\begin{theorem} \label{count} Let  $R$ be a generalized local ring such that $|{\rm Min}(R)| =  |{\rm Min}(\widehat{R})|$. Then  
the maximal ideal  of $R$ is the radical of a principal ideal if and only if $R_{red}$ is a  Noetherian ring of Krull dimension at most $1$.  
\end{theorem}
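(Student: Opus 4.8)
The plan is to prove the two implications separately; only the forward direction will use the hypothesis $|\mathrm{Min}(R)| = |\mathrm{Min}(\widehat R)|$. For the converse, suppose $R_{red}$ is Noetherian of Krull dimension at most one. Then the maximal ideal $M/N(R)$ of the local Noetherian ring $R_{red}$ is the radical of an ideal generated by a system of parameters, which has at most one element; lifting a generator to $x \in R$ and using that $N(R) \subseteq \sqrt{xR}$, I would conclude $M = \sqrt{xR + N(R)} = \sqrt{xR}$. This direction needs neither the hypothesis on minimal primes nor anything beyond elementary dimension theory.

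For the forward direction, assume $M = \sqrt{xR}$. First, Theorem~\ref{pre pre stable} gives that $\widehat R$ is Noetherian of Krull dimension at most one, so $\mathrm{Min}(\widehat R) = \{Q_1,\dots,Q_r\}$ is finite. Since $R$ is a generalized local ring it embeds in $\widehat R$, and $N(R) = N(\widehat R)\cap R = \bigcap_j (Q_j\cap R)$, so the minimal primes of $R$ are exactly the minimal members of $\{Q_j\cap R\}$. The equality $|\mathrm{Min}(R)| = r$ then forces the contraction $Q_j \mapsto Q_j\cap R$ to be a bijection onto $\mathrm{Min}(R)$ with each $P_j := Q_j \cap R$ a minimal prime of $R$. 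I would exploit this by studying, for each $j$, the ring $A_j := R/P_j$: its maximal ideal $M/P_j$ equals $\sqrt{(x+P_j)}$ and is regular ($A_j$ is a domain, and if $A_j$ is a field there is nothing to prove), and injecting $A_j$ into the complete ring $\widehat R/Q_j$ and applying Krull's intersection theorem shows $\bigcap_i (M/P_j)^i = 0$, so $A_j$ is a generalized local ring. A short argument using $\dim\widehat R \le 1$ together with the bijection shows that $Q_j$ is the \emph{unique} minimal prime of $\widehat R$ over $P_j\widehat R$; equivalently, $\widehat{A_j} \cong \widehat R/P_j\widehat R$ has a unique minimal prime.

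The dimension bound is the heart of the matter, and this is the step I expect to be the main obstacle, since it is where the hypothesis on minimal primes is actually consumed. Here I would apply Lemma~\ref{contraction} to each $A_j$: every one-dimensional prime of $A_j$ is contracted from a minimal prime of $\widehat{A_j}$. But that minimal prime is unique, and as the nilradical of $\widehat{A_j}$ its contraction to the domain $A_j$ is $(0)$. Hence the only one-dimensional prime of $A_j$ is $(0)$, which is impossible when $\dim A_j \ge 2$ (a maximal chain of length at least two produces a nonzero prime covered by the maximal ideal). Therefore $\dim A_j \le 1$, and consequently $\dim R_{red} = \max_j \dim A_j \le 1$.

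Finally, for Noetherianness I would first show each $A_j$ is Noetherian and then descend to $R_{red}$. Since $A_j$ is a domain of dimension at most one with finitely generated maximal ideal, and $A_j/M^nA_j \cong \widehat{A_j}/M^n\widehat{A_j}$ is Noetherian for every $n$, any nonzero ideal $I$ of $A_j$ is $M$-primary and so contains a power $M^N$ of the finitely generated ideal $M/P_j$; then $I/M^N$ is a finitely generated submodule of the Noetherian module $A_j/M^N$ and $M^N$ is finitely generated, so $I$ is finitely generated and $A_j$ is Noetherian. To pass to $R_{red}$, observe that $R_{red}$ is reduced with minimal primes $P_1/N(R),\dots,P_r/N(R)$, so $R_{red} \hookrightarrow \prod_j A_j$, realizing $\prod_j A_j$ as an overring of $R_{red}$ inside its total quotient ring $\prod_j \mathrm{Frac}(A_j)$. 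The ring $\prod_j A_j$ is generated over $R_{red}$ by the orthogonal idempotents, which are integral, and hence is module-finite over $R_{red}$. As a finite product of Noetherian rings it is itself Noetherian, so the Eakin--Nagata theorem yields that $R_{red}$ is Noetherian. Combined with the dimension bound, this completes the forward direction.
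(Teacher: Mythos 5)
Your proof is correct, and it is powered by the same two ingredients as the paper's: Theorem~\ref{pre pre stable} (so that $\widehat{R}$ is Noetherian of Krull dimension at most one) and Lemma~\ref{contraction} combined with the cardinality hypothesis, which forces contraction to be a bijection from ${\rm Min}(\widehat{R})$ onto ${\rm Min}(R)$. The difference lies in where Lemma~\ref{contraction} is applied. The paper works directly in $R$: a nonmaximal prime $P$ lies under a one-dimensional prime $Q$, the lemma exhibits $Q$ as the contraction of a minimal prime of $\widehat{R}$, and the bijection forces $Q$, hence $P$, to be minimal, giving $\dim R \leq 1$ in one stroke. You instead pass to each quotient domain $A_j = R/P_j$, verify that $A_j$ is again a generalized local ring whose (regular) maximal ideal is the radical of a principal ideal, identify $\widehat{A_j}$ with $\widehat{R}/P_j\widehat{R}$ and show it has a unique minimal prime, and only then invoke Lemma~\ref{contraction}. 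This costs more bookkeeping (the isomorphism $\widehat{A_j} \cong \widehat{R}/P_j\widehat{R}$ rests on the Krull intersection theorem in the Noetherian ring $\widehat{R}$), but it has the merit that the hypotheses of Lemma~\ref{contraction}, in particular regularity of the maximal ideal, are verified on the nose for the domains $A_j$, whereas $M$ itself need not be regular in $R$. The steps you leave implicit do go through: any $Q_i \supseteq P_j\widehat{R}$ contracts to a minimal prime of $R$ containing $P_j$, so $i = j$ by the bijection, and $M\widehat{R}$ cannot be minimal over $P_j\widehat{R}$ because $Q_j$ sits properly below it. Finally, for Noetherianness the paper simply cites \cite{GHR}, while you give a self-contained argument: each $A_j$ is Noetherian because its nonzero ideals contain a power of the finitely generated maximal ideal, and $R_{red}$ is recovered by Eakin--Nagata from the module-finite extension $\prod_j A_j$. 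That is a genuine, if modest, gain in self-containment over the paper's citation.
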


\begin{proof}  Suppose the maximal ideal $M$ of $R$ is the radical of a principal ideal. If $R_{red}$ has Krull dimension $0$, then $R_{red}$ is a field. Suppose $R_{red}$ has Krull dimension  more than $0$. Since $M$ is the radical of a principal ideal, say $M = \sqrt{mR}$,  it follows that the image of  $m$ in $R_{red}$ is a nonzerodivisor and hence $MR_{red}$ is regular. By Lemma~\ref{contraction}, each minimal prime ideal of $R$ is contracted from a minimal prime ideal of $\widehat{R}$. Thus, since $|{\rm Min}(R)| =  |{\rm Min}(\widehat{R})|$, it follows that every minimal prime ideal of $\widehat{R}$ contracts to a minimal prime ideal of $R$.  If $P$ is a nonmaximal prime ideal of $R$, then since $M$ is the radical of a principal ideal, $P$ is contained in a one-dimensional prime ideal $Q$ of $R$.  
By Lemma~\ref{contraction}, $Q$ is contracted from a minimal prime ideal of $\widehat{R}$, which by what we have established forces $Q$ to be a minimal prime ideal of $R$. Therefore, $P = Q$ and  $R$ has Krull dimension one. Since also $M$ is a finitely generated ideal,  $R_{red}$ is a Noetherian ring \cite[Corollary 1.21]{GHR}. 
Conversely, if $R_{red}$ is a Noetherian ring of Krull dimension at most $1$, then there are  finitely many nonmaximal prime ideals of $R$. By Prime Avoidance, $M$ is the radical of a principal ideal of $R$. 
 \end{proof}

\begin{corollary} \label{previous} Let  $R$ be a generalized local ring. If $\widehat{R}$ is a  one-dimensional domain, then $R$ is a  one-dimensional  Noetherian domain.
\end{corollary}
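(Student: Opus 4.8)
The plan is to obtain this as a bookkeeping consequence of Theorem~\ref{count} and Theorem~\ref{pre pre stable}, after first reducing to the domain case. Since $\widehat{R}$ is a domain, its unique minimal prime is $(0)$, so $|{\rm Min}(\widehat{R})| = 1$. First I would use that $R$ is a generalized local ring: the kernel of the completion map $R \to \widehat{R}$ is $\bigcap_{i>0} M^i = 0$, so this map is injective (exactly as observed in the proof of Lemma~\ref{contraction}). Thus $R$ is a subring of the domain $\widehat{R}$ and is therefore itself a domain. In particular its nilradical is zero, so $R_{red} = R$, and $(0)$ is the unique minimal prime of $R$, giving $|{\rm Min}(R)| = 1 = |{\rm Min}(\widehat{R})|$. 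This verifies the cardinality hypothesis of Theorem~\ref{count} at no cost.

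Next I would produce the principal-ideal hypothesis needed to apply Theorem~\ref{count}. By Cohen's theorem \cite[Theorem 3]{Cohen}, the completion $\widehat{R}$ is Noetherian, and by assumption it has Krull dimension one; this is precisely condition (2) of Theorem~\ref{pre pre stable} (whose hypotheses are met, since a generalized local ring has finitely generated maximal ideal). Invoking the equivalence of conditions (1) and (2) in that theorem, the maximal ideal $M$ of $R$ is the radical of a principal ideal of $R$.

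With both hypotheses of Theorem~\ref{count} in hand, that theorem gives that $R_{red}$ is a Noetherian ring of Krull dimension at most one. Since $R = R_{red}$, the ring $R$ is a Noetherian domain of Krull dimension at most one. It remains only to rule out dimension zero: a zero-dimensional Noetherian local domain is a field, and then $\widehat{R} = R$ would be a field, contradicting $\dim \widehat{R} = 1$. Hence $\dim R = 1$, and $R$ is a one-dimensional Noetherian domain. I do not expect a genuine obstacle here; the argument is essentially a specialization of the two cited theorems, and the only point requiring any care is the passage from the statement about $R_{red}$ to the full conclusion, which is settled by the two elementary observations that $R$ is reduced (being a subring of a domain) and that it is not a field.
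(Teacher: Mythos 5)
Your proof is correct and follows essentially the same route as the paper: embed $R$ in $\widehat{R}$ to see it is a domain, get the principal-ideal condition from Theorem~\ref{pre pre stable}(2)$\Rightarrow$(1), and conclude via Theorem~\ref{count} with $|{\rm Min}(R)| = 1 = |{\rm Min}(\widehat{R})|$. Your explicit handling of the dimension-zero case is a small point the paper's proof passes over silently, but it is the same argument.
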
 

\begin{proof} Since $\widehat{R}$ is a one-dimensional ring, Theorem~\ref{pre pre stable} implies that the maximal ideal $M$ of $R$ is the radical of a principal ideal. As a generalized local ring, $R$ embeds in $\widehat{R}$, so $R$ is a domain. Thus $|$Min$(R)| = 1 =  $ $|$Min$(\overline{R})|$. By Theorem~\ref{count}, $R$ is a one-dimensional Noetherian domain. 
\end{proof}

\begin{remark} 
{\em A reduced one-dimensional generalized local ring is a Noetherian ring \cite[Corollary 1.21]{GHR}. However, there exist  non-reduced one-dimensional generalized local rings that are not Noetherian rings; see for example \cite[Example 4.11]{OOne}. }
\end{remark}

\begin{corollary} Let  $R$ be a local ring with finitely generated maximal ideal. If $\widehat{R}$ is a  one-dimensional domain, then $R/(\bigcap_{i>0}M^i)$ is a  one-dimensional  Noetherian domain.
\end{corollary}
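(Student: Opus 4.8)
The plan is to reduce immediately to Corollary~\ref{previous} by recognizing that the quotient $A := R/(\bigcap_{i>0}M^i)$ is itself a generalized local ring whose $\mathfrak{m}$-adic completion coincides with $\widehat{R}$. As recorded in the third reduction of the introduction, since $R$ is local with finitely generated maximal ideal $M$, the quotient $A = R/(\bigcap_{i>0}M^i)$ is a generalized local ring; its maximal ideal is $\mathfrak{m} = M/C$, where $C = \bigcap_{i>0}M^i$.

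The key observation I would establish is that $\widehat{A} = \widehat{R}$. First I would note that $C \subseteq M^i$ for every $i>0$, so that $M^i + C = M^i$ and hence $A/\mathfrak{m}^i = (R/C)/(M^i/C) \cong R/M^i$ canonically for every $i$. The surjection $R \rightarrow A$ therefore induces isomorphisms on every level of the two filtrations, and passing to inverse limits gives $\widehat{A} = \varprojlim A/\mathfrak{m}^i = \varprojlim R/M^i = \widehat{R}$, where both completions are taken with respect to the respective maximal-ideal-adic topologies.

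With this identification in hand, the hypothesis that $\widehat{R}$ is a one-dimensional domain becomes the assertion that $\widehat{A}$ is a one-dimensional domain. Since $A$ is a generalized local ring, Corollary~\ref{previous} then applies directly and yields that $A = R/(\bigcap_{i>0}M^i)$ is a one-dimensional Noetherian domain, which is exactly the conclusion sought.

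I expect the only point requiring any care --- and it is minor --- to be the identification $\widehat{A} = \widehat{R}$, specifically the bookkeeping that the $\mathfrak{m}$-adic filtration of $A$ has successive quotients canonically isomorphic to those of the $M$-adic filtration of $R$, so that the natural map of completions is an isomorphism of rings. Everything else is a direct appeal to the already-established Corollary~\ref{previous}, which handles the substantive content (via Theorem~\ref{count} and Lemma~\ref{contraction}).
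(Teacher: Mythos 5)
Your proposal is correct and is essentially the paper's own argument: the paper likewise deduces the corollary from Corollary~\ref{previous} together with the observation that $R/(\bigcap_{i>0}M^i)$ is a generalized local ring whose completion is $\widehat{R}$. You simply spell out the identification of completions (via $C \subseteq M^i$ and $A/\mathfrak{m}^i \cong R/M^i$) that the paper leaves implicit.
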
 

\begin{proof} 
This follows from Corollary~\ref{previous}  and the fact that $R/(\bigcap_{i>0}M^i)$ is a generalized local ring with completion $\widehat{R}$.  
\end{proof}


We now prove a principal ideal theorem for generalized local rings that is similar in spirit to Krull's theorem:
 For a generalized local ring,  though a maximal ideal $M$ that is minimal over a principal ideal may not have height one, there is a height one maximal ideal lying over $M$ in an integral extension of $R$.  To prove Theorem~\ref{at least one}, we use the following lemma.

\begin{lemma} \label{main} Let $R$ be  a  local ring with finitely generated regular maximal ideal $M$ such that $M = \sqrt{mR}$ for some $m\in M$. Let $R'$ be the integral closure of $R$ in $T(M)$, let $C = \bigcap_{i} m^iR$ and $C' = \bigcap_i m^iR'$.   Then $\sqrt{C} = C' \cap R$ and  there is $k>0$ such that $(C' \cap R)^k \subseteq C$.
\end{lemma}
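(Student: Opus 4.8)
The plan is to deduce both assertions from the structure theory of $R'$ developed earlier, together with the uniform Rees-type bound of Lemma~\ref{bound}. First I would record the relevant properties of $R'$. Since $M=\sqrt{mR}$ is the radical of a principal regular ideal, Theorem~\ref{pre stable} shows that $R'$ is Pr\"ufer in dimension zero, and Lemma~\ref{Mati} shows that $R'$ is semilocal. As $R'$ is integral over $R$ and $m\in M\subseteq\Jac R$, we have $\Jac R'=\sqrt{mR'}$, and $m$ remains a nonzerodivisor in $R'\subseteq Q(R)$. Thus $R'$ satisfies the hypotheses of Theorem~\ref{new lemma} with the finitely generated regular ideal $A=mR'$, and since $\bigcap_k (mR')^k=\bigcap_k m^kR'=C'$, part (2) of that theorem tells us that $R'/C'$ is reduced; that is, $C'$ is a radical ideal of $R'$.

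Granting this, one inclusion of the first assertion is immediate. Because $m^nR\subseteq m^nR'$ for every $n$, we get $C=\bigcap_n m^nR\subseteq C'\cap R$. Since $C'$ is radical in $R'$, the natural injection $R/(C'\cap R)\hookrightarrow R'/C'$ has reduced target, so $C'\cap R$ is a radical ideal of $R$; hence $\sqrt{C}\subseteq C'\cap R$.

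The heart of the argument is the reverse inclusion together with the power bound, and here I would pass through the integral closures of the ideals $m^nR$. The key observation is that $m^nR'\cap R\subseteq\overline{m^nR}$ for every $n$: if $x=m^n y$ with $y\in R'$ and $y^s+c_1y^{s-1}+\cdots+c_s=0$ with $c_i\in R$, then multiplying by $m^{ns}$ yields $x^s+(c_1m^n)x^{s-1}+\cdots+c_sm^{ns}=0$, exhibiting $x$ as integral over the ideal $m^nR$, since $c_im^{in}\in(m^nR)^i$. Intersecting over $n$ gives $C'\cap R\subseteq\bigcap_n\overline{m^nR}$. Now I would invoke Lemma~\ref{bound} with the $M$-primary ideal $I=mR$ to obtain a single integer $k>0$, independent of $n$, with $(\overline{m^nR})^{k}\subseteq m^nR$ for all $n$. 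Given any $x_1,\dots,x_k\in C'\cap R$, for each $n$ the product $x_1\cdots x_k$ lies in $(\overline{m^nR})^{k}\subseteq m^nR$; intersecting over $n$ yields $x_1\cdots x_k\in C$. Hence $(C'\cap R)^{k}\subseteq C$, which is the second assertion, and in particular $C'\cap R\subseteq\sqrt{C}$, completing the equality $\sqrt{C}=C'\cap R$.

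The main obstacle I anticipate is securing a uniform exponent $k$ that works simultaneously for all $n$: without it, the containments $m^nR'\cap R\subseteq\overline{m^nR}$ only control each graded piece separately and cannot be intersected over $n$ to reach $C$. This is exactly the role of Lemma~\ref{bound}, which is what bridges the ideal-by-ideal integral-closure estimate and the statement about the infinite intersections $C$ and $C'$ in this non-Noetherian setting.
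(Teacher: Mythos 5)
Your proposal is correct and follows essentially the same route as the paper's proof: establish via Theorem~\ref{new lemma}(2) that $C'$ is a radical ideal of $R'$, observe that $m^nR'\cap R\subseteq\overline{m^nR}$ since $R'$ sits inside the integral closure of $R$ in $Q(R)$, and then apply the uniform bound of Lemma~\ref{bound} to the $M$-primary ideal $mR$ to get a single $k$ with $(C'\cap R)^k\subseteq m^nR$ for all $n$. The only cosmetic difference is that you spell out the verification that $m^nR'\cap R\subseteq\overline{m^nR}$, which the paper asserts without computation.
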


\begin{proof} 
%
By Lemma~\ref{Mati}, $R'$ is semilocal. By Theorem~\ref{pre stable}, $R'$ is Pr\"ufer in dimension zero. Since $\sqrt{mR'} = \Jac R'$,  
  Theorem~\ref{new lemma}(2) implies that  $C'$ is a radical ideal of $R'$. 
  Thus to prove the lemma it suffices to show that there is $k>0$ such that  $(C' \cap R)^k \subseteq C$.
 %
Since $R'$ is a subring of the integral closure of $R$ in its total quotient ring, we have that $m^nR' \cap R$ is contained in the  integral closure $\overline{m^nR}$ of $m^nR$ for each $n>0$. 
 Since $\sqrt{mR}  = M$, the ideal  $m^nR' \cap R$ is $M$-primary. Hence Lemma~\ref{bound} implies  there is $k>0$ such that for each $n>0$, $(m^nR' \cap R)^k \subseteq (\overline{m^nR})^{k} \subseteq m^nR$.  
 Therefore, for each $n>0$, we have $(C' \cap R)^k \subseteq (m^nR' \cap R)^k \subseteq m^nR$.  Consequently, 
 $(C' \cap R)^k
 \subseteq C$.
%
\end{proof}


\begin{theorem} \label{at least one} Let  $R$ be a generalized local ring whose maximal ideal $M$ is regular and  the radical of a principal ideal.  Let $R'$ be the integral closure of $R$ in $T(M)$.  
  Then each minimal prime ideal of $R$ is contained in a   height one maximal ideal of  $R'$. 
\end{theorem}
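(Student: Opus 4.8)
The plan is to pass to the quotient $R'/C'$, where $C' = \bigcap_i m^iR'$, apply the structure theory of Theorem~\ref{new lemma} to $R'$, and then transfer the resulting height one maximal ideal back to $R'$ itself by invoking incomparability for the integral extension $R \subseteq R'$. First I would fix the setup. Since $M$ is regular and $M = \sqrt{mR}$ for some $m \in M$, this $m$ is a nonzerodivisor (a power of a nonzerodivisor in $M$ lies in $mR$); because $R$ is a generalized local ring, $\bigcap_i M^i = 0$, and as the $M$-adic and $mR$-adic topologies agree, $C := \bigcap_i m^iR = 0$. Lemma~\ref{main} then gives $C' \cap R = \sqrt{C} = N(R)$, the nilradical of $R$. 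By Lemma~\ref{Mati} and Theorem~\ref{pre stable}, $R'$ is semilocal and Pr\"ufer in dimension zero with $\Jac R' = \sqrt{mR'}$, so Theorem~\ref{new lemma} applies to $R'$ with the finitely generated regular ideal $mR'$ and with $R^* = R'/C'$. In particular each maximal ideal $N$ of $R'$ contains a unique largest nonmaximal prime $P'_N$, and every maximal ideal of $R'$ contains $C'$ (as $C' \subseteq mR' \subseteq \Jac R'$).

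Next I would run the transfer through $R'/C'$. Since $C' \cap R = N(R)$, the inclusion $R \hookrightarrow R'$ induces an injective integral extension $R_{red} = R/N(R) \hookrightarrow R'/C'$. Given a minimal prime $P$ of $R$, its image $\overline{P} = P/N(R)$ is a minimal prime of $R_{red}$, so by Lying Over there is a minimal prime $\overline{Q}$ of $R'/C'$ contracting to $\overline{P}$. By Theorem~\ref{new lemma}(4), $\overline{Q}$ is contained in a height one maximal ideal $\overline{N} = N/C'$ of $R'/C'$; and by Theorem~\ref{new lemma}(5) the unique nonmaximal prime of $R'/C'$ lying below $\overline{N}$ is the image $P'_N/C'$ of the largest nonmaximal prime $P'_N$ of $R'$ below $N$. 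Hence $\overline{Q} = P'_N/C'$, and contracting back to $R$ gives $P'_N \cap R = P$.

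The crux—and the step I expect to be the main obstacle—is to promote ``height one in $R'/C'$'' to ``height one in $R'$,'' that is, to show that $P'_N$, which a priori is only minimal over $C'$, is genuinely a minimal prime of $R'$. Here I would use incomparability for the integral extension $R \subseteq R'$: any prime $\mathfrak{q} \subseteq P'_N$ of $R'$ satisfies $\mathfrak{q} \cap R \subseteq P'_N \cap R = P$, so $\mathfrak{q} \cap R = P$ by minimality of $P$, and incomparability then forces $\mathfrak{q} = P'_N$. Thus $P'_N$ is minimal in $R'$. Since $P'_N$ is the largest nonmaximal prime below $N$, the ring $R'$ has no prime strictly between $P'_N$ and $N$ and none strictly below $P'_N$, so $\mathrm{ht}(N) = 1$. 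Finally the chain $P \subseteq P'_N \subseteq N$ exhibits the minimal prime $P$ inside the height one maximal ideal $N$ of $R'$, which is exactly the assertion. The delicate point throughout is that $C'$ need not be zero when $\dim R > 1$—the surplus dimension is precisely what $C'$ absorbs—so the passage to $R'/C'$ is essential, and incomparability is what rescues the height computation from this quotient.
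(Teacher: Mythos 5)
Your proof is correct, and it rests on the same pillars as the paper's: Lemma~\ref{Mati} and Theorem~\ref{pre stable} to put $R'$ within the scope of Theorem~\ref{new lemma}, and Lemma~\ref{main} to connect $C'=\bigcap_i m^iR'$ with the generalized local hypothesis $\bigcap_i M^i=0$. The one genuine divergence is in the step you call the crux. The paper extracts from Lemma~\ref{main} not the radical identity $C'\cap R=\sqrt{C}$ but the nilpotency statement $(C'\cap R)^k\subseteq C=0$, which yields $(C')^k=0$: for $x_1,\ldots,x_k\in C'\subseteq R[1/m]$ one clears denominators into $C'\cap R$ and cancels the nonzerodivisor $m$. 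Once $C'$ is nilpotent, ${\rm Spec}(R')$ and ${\rm Spec}(R'/C')$ coincide, so the height one maximal ideals produced by Theorem~\ref{new lemma}(4) are literally height one in $R'$, and matching a minimal prime $P$ of $R$ with one of the minimal primes $Q_1,\ldots,Q_n$ of $R'$ is a one-line primality argument: $\prod_i(Q_i\cap R)\subseteq C'\cap R$ is nilpotent, hence contained in $P$, hence some $Q_i\cap R=P$ by minimality of $P$. Your substitute---Lying Over in the reduced quotient to locate the bottom prime, then Incomparability for the integral extension $R\subseteq R'$ to certify that $P'_N$ is minimal in $R'$ and hence that $N$ has height one---is valid and slightly more robust, since it uses only the radical identity and not the nilpotency of $C'$; but it is more machinery than the situation requires, and the obstacle you anticipated dissolves once you notice that Lemma~\ref{main} already forces $C'$ to be nilpotent here. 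Two small points you should make explicit if you keep your route: that $C'\subseteq P'_N$ (so that $P'_N/C'$ is indeed a prime of $R'/C'$), which follows from the identity $C'=P_1\cap\cdots\cap P_n$ established in the proof of Theorem~\ref{new lemma}(2), and that the extension $R_{red}\rightarrow R'/C'$ is injective, which is exactly the identity $C'\cap R=N(R)$ you derived.
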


\begin{proof}
By Theorem~\ref{pre stable}, $R'$ is a semilocal ring with principal maximal ideals. 
Let $m \in R$ such that $M = \sqrt{mR}$, and let $C' = \bigcap_{i>0}m^iR'$. 
By Lemma~\ref{main},  there exists $k>0$ such that  $(C')^k = 0$.  Since $R'$ is semilocal and every zero-dimensional ideal of $R'$ is principal and regular, Theorem~\ref{new lemma}(1) implies there are finitely many  minimal prime ideals $Q_1,\ldots,Q_n$ of $R'$, and these prime ideals have the properties that   $C' = Q_1 \cap \cdots \cap Q_n$ and  each $Q_i$ is  contained a height one maximal ideal $M_i$ of $R'$. 
 Let $P$ be a minimal prime ideal of $R$. 
 Since $(Q_1 \cap \cdots \cap Q_n)^k \subseteq P$ and $P$ is minimal, there exists $i$ such that $Q_i \cap R = P$. Thus $P $ is contained in the height one maximal ideal $M_i$.
 %
\end{proof}

\begin{corollary} Let $R$ be a generalized local ring  with regular  maximal ideal $M$  such that $M$ is the radical of a principal ideal, and let $R'$ be  the integral closure of $R$ in $T(M)$.
If $R'$ is a local ring, then $R$ has Krull dimension one.
%

\end{corollary}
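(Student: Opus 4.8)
The plan is to reduce the computation of $\dim R$ to that of $\dim R'$, exploiting that $R'$ is integral over $R$, and then to read off $\dim R' = 1$ directly from the height-one information already recorded in Theorem~\ref{at least one}. Thus the whole corollary should fall out of the previous theorem together with the standard behavior of Krull dimension under integral extensions.

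First I would observe that the hypotheses here are exactly those of Theorem~\ref{at least one}: since $R$ is a generalized local ring its maximal ideal $M$ is finitely generated, and $M$ is assumed regular and the radical of a principal ideal. Hence Theorem~\ref{pre stable} applies, so $R'$ is a semilocal ring whose maximal ideals are principal, and Theorem~\ref{at least one} applies, so each minimal prime ideal of $R$ is contained in a height one maximal ideal of $R'$. Now suppose in addition that $R'$ is local, with maximal ideal $M'$. Since $R$ is nonzero it has a minimal prime ideal $P$, and by Theorem~\ref{at least one} this $P$ lies inside some height one maximal ideal of $R'$. But $M'$ is the only maximal ideal of $R'$, so that maximal ideal must be $M'$; consequently $\height M' = 1$.

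Because $R'$ is local, every prime ideal of $R'$ is contained in $M'$, and therefore $\dim R' = \height M' = 1$. It then remains to transfer this to $R$: the tower $R \subseteq R' \subseteq T(M) \subseteq Q(R)$ exhibits $R'$ as an integral extension of $R$, so by the standard combination of Lying-Over, Going-Up, and Incomparability one has $\dim R = \dim R'$, giving $\dim R = 1$ as claimed.

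The only point requiring genuine care, and the step I would flag as the main obstacle, is confirming that the height one maximal ideal supplied by Theorem~\ref{at least one} must coincide with the unique maximal ideal $M'$, so that $\height M' = 1$ \emph{exactly} (neither $0$ nor larger); once $\height M' = 1$ is secured, the equality $\dim R = \dim R'$ for the integral extension is routine. In particular, the hypothesis that $R'$ be local is precisely what collapses the possibly several height one maximal ideals of the semilocal ring $R'$ into a single one of height one, thereby forcing $R$ itself to have Krull dimension one.
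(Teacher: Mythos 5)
Your proof is correct and follows essentially the same route as the paper: invoke Theorem~\ref{at least one} to place a minimal prime of $R$ inside a height one maximal ideal of $R'$, note that locality of $R'$ forces this to be the unique maximal ideal so that $\dim R' = 1$, and transfer to $R$ by integrality. The only difference is that you spell out the final step $\dim R = \dim R'$ (via Lying-Over, Going-Up, Incomparability), which the paper leaves implicit.
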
 

\begin{proof} By Theorem~\ref{at least one}, any minimal prime ideal of $R$ is contained in a height one maximal ideal of $R'$. Since $R'$ is local, this forces $R'$ to be a one-dimensional ring. \end{proof}

\end{document}